\numberwithin{equation}{section}
\begin{document}

\newtheorem{theorem}{Theorem}[section]
\newtheorem{lemma}[theorem]{Lemma}
\newtheorem{corollary}[theorem]{Corollary}
\newtheorem{proposition}[theorem]{Proposition}
\newtheorem{definition}[theorem]{Definition}
\newtheorem{example}[theorem]{Example}
\newtheorem{conjecture}[theorem]{Conjecture}
\newtheorem{claim}[theorem]{Claim}
\newtheorem{xca}[theorem]{Exercise}

\newcommand{\ind}{\mbox{ind}}
\newcommand{\conn}{\mbox{connectivity}}
\newcommand{\Hom}{\mbox{Hom}}

\theoremstyle{remark}
\newtheorem{remark}[theorem]{Remark}
\newcommand{\be}{\begin{equation}}
\newcommand{\ee}{\end{equation}}
\newcommand{\lexmin}{\mbox{lexmin}}
\newcommand{\prob}{\mbox{\bf Pr}}
\newcommand{\var}{\mbox{Var}}
\newcommand{\cov}{\mbox{Cov}}
\newcommand{\diam}{\mbox{diam}}
\newcommand{\nerve}{\mathcal{N}}
\newcommand{\Star}{\mbox{st}}
\newcommand{\supp}{\mbox{supp}}
\newcommand{\vsupp}{\mbox{vsupp}}
\newcommand{\link}{\mbox{lk}}
\newcommand{\Z}{\mathbb{Z}}
\newcommand{\R}{\mathbb{R}}
\newcommand{\Q}{\mathbb{Q}}
\newcommand{\Homology}{\widetilde{H}}
\newcommand{\bound}{\partial}
\newcommand{\XG}{X(n,p)}

\title{Random geometric complexes}

\author{Matthew Kahle}
\address{Department of Mathematics, Stanford University}
\email{mkahle@math.stanford.edu}
\thanks{Supported in part by Stanford's NSF-RTG grant in geometry \& topology}

\date{\today}

\maketitle


\begin{abstract}

We study the expected topological properties of \v{C}ech and Vietoris-Rips complexes built on random points in $\R^d$. 
We find higher dimensional analogues of known results for connectivity and component counts for random geometric graphs. 
However, higher homology $H_k$ is not monotone when $k >0$.

In particular for every $k>0$ we exhibit two thresholds, one where homology passes from vanishing to nonvanishing, and another where 
it passes back to vanishing.  We give asymptotic formulas for the expectation of the Betti numbers in the sparser regimes, and bounds in the denser
regimes.  The main technical contribution of the article is the application of discrete Morse theory in geometric probability.

\end{abstract}

\section{Introduction}

The random geometric complexes studied here are simplicial complexes built on an i.i.d.\ random points in Euclidean space $\R^d$. We identify here the basic topological features of these complexes. In particular, we identify intervals of vanishing and non-vanishing for 
each homology group $H_k$, and give asymptotic formulas for the expected rank of homology when it is non-vanishing.

There are several motivations for studying this. The area of topological data analysis has been very active lately \cite
{Afra_computing, Edelsbrunner_survey}, and there is a need for a 
probabilistic null hypothesis to compare with topological statistics of point cloud data \cite{Carlsson}.

One approach to this problem was taken by Niyogi, Smale, and Weinberger \cite{Smale}, who studied the model where $n$ points are sampled uniformly and independently from a 
compact manifold $M$ embedded in $\R^d$, and estimates were given for how large $n$ must be in order to ``learn'' the topology of 
$M$ with high probability.  Their approach was to take balls of radius $r$ centered at the $n$ points and approximate the manifold by 
the \v{C}ech complex; provided that $r$ is chosen carefully, once there are enough balls to cover the manifold, one has a finite 
simplicial complex with the homotopy type of the manifold so in particular one can compute homology groups and so on.

The main technical innovation in \cite{Smale} is a geometric method for bounding above the number of random balls needed to cover the manifold, given some information about the curvature of the manifold's embedding. The  
assumption here is that one already knows how large $r$ must be, or that one at least has enough information about 
the geometry of the embedding of $M$ in order to determine $r$. (In a second article, they are able to 
recapture the topology of the manifold, even in the more difficult setting when Gaussian noise is added to every sampled point \cite{NSW2}.  Still, one needs some information about the embedding of the manifold.)

In this article we study both random Vietoris-Rips and \v{C}ech complexes for fairly general distributions on 
Euclidean space $\R^d$, and most importantly, allowing the radius of balls $r$ to vary from $0$ to $\infty$. We identify thresholds 
for non-vanishing and vanishing of homology groups $H_k$ and also derive asymptotic formulas and bounds on expectations of the 
Betti numbers $\beta_k$ in terms of $n$ and $r$.  It is well understood in computational topology that persistent homology is more 
robust than homology alone (see for example the stability results of Cohen-Steiner, Edelsbrunner, and Harer \cite{Edelsbrunner}), and 
one might not know anything about the underlying space, so in practice one computes persistent homology over a wide regime of 
radius \cite{Afra_computing}.

There is also a close connection to geometric probability, and in particular the theory of geometric random graphs.
Some of our results are higher-dimensional analogues of thresholds for connectivity and component counts in random geometric graphs
due to
Penrose \cite{Penrose}, and we must also use Penrose's results several times.  However, an important contrast is that the properties 
studied here are decidedly non-monotone. In particular, for each $k$ there is an interval of radius $r$ for which the homology group 
$H_k \neq 0$, and with the expected rank of homology $E[\beta_k]$ roughly unimodal in the radius $r$, but we also show that for 
large enough or small enough radius, $H_k=0$.

This paper can also be viewed in the context of several recent articles on the topology of random simplicial complexes \cite
{Linial_Meshulam, Meshulam_Wallach, BHK, Kahle_neighborhood, Kahle_clique, Pippenger}.  This article discusses a fairly 
general framework for random complexes, since one has the freedom to choose the underlying density function, hence an infinite-
dimensional parameter space.

The probabilistic method has given non-constructive existence proofs, as well as many interesting and extremal examples in 
combinatorics \cite{Alon}, geometric group theory \cite{gromov_random}, and discrete geometry \cite{Linial_Novik}.  Random 
spaces will likely provide objects of interest to topologists as well.

The problems discussed here were suggested, and the basic regimes described, in Persi Diaconis's MSRI 
talk in 2006 \cite{Persi_MSRI}. Some of the results in this article may have been discovered concurrently and independently by other 
researchers; it seems that Yuliy Barishnikov and Shmuel Weinberger have also thought about similar things \cite{Yuliy}. However, we 
believe that this article fills a gap in the literature and hope that it is useful as a reference.

\subsection{Definitions}

We require a few preliminary definitions and conventions.

\begin{definition} For a set of points $X \subseteq \R^d$, and positive distance $r$, define the {\it geometric graph} $G( X ; r)$ as 
the graph with vertices $V(G)=X$ and edges $E(G)=\{ \{x,y \} \mid d(x,y) \le  r \}$.
\end{definition}

\begin{definition} Let $f : \R^d \to \R$ be a probability density function, let $x_1, x_2, \ldots$ be a sequence of independent and 
identically distributed $d$-dimensional random variables with common density $f$, and let $X_n=\{x_1, x_2, \ldots, x_n\}$. The {\it 
geometric random graph} $G(X_n;r)$ is the geometric graph with vertices $X_n$, and edges between every pair of vertices $u,v$ 
with $d(u,v) \le r$.
\end{definition}

Throughout the article we make mild assumptions about $f$, in particular we assume that $f$ is a bounded Lebesgue-measurable function, 
and that $$\int_{\R^d} f(x) dx = 1$$ (i.e.\ that $f$ actually is a probability density function).

In the study of geometric random graphs \cite{Penrose} $r$ usually depends on $n$,  and one studies the asymptotic behavior of the graphs as $n \to \infty$.

\begin{definition} We say that $G(X_n; r_n)$ {\it asymptotically almost surely (a.a.s.)} has property $\mathcal{P}$ if $$\prob ( G
(X_n;r ) \in \mathcal{P} ) \to 1$$ as $n \to \infty$. \end{definition}

The main objects of study here are the \v{C}ech and Vietoris-Rips complexes on $X_n$, which are simplicial complexes built on the 
geometric random graph $G(X_n;r)$.  A historical comment:  the Vietoris-Rips complex was first introduced by Vietoris in 
order to extend simplicial homology to a homology theory for more general metric spaces \cite{Vietoris}.  Eliyahu Rips applied the same complex to the study of hyperbolic groups, and Gromov popularized the name Rips complex \cite{hyperbolic}.  The name ``Vietoris-Rips complex'' is apparently due to Hausmann \cite{Hausmann}.

Denote the closed ball of radius $r$ centered at a point $p$ by $B(p,r)=\{x \mid d(x,p) \le r \}$.

\begin{definition} The {\it random \v{C}ech complex} $C(X_n;r)$ is the simplicial complex with vertex set $X_n$, and $\sigma$ 
a face of $C(X_n;r)$ if $$\bigcap_{x_i \in  \sigma} B(x_i,r/2) \neq \emptyset.$$
\end{definition}
\begin{definition} The {\it random Vietoris-Rips complex} $R(X_n;r)$ is the simplicial complex with vertex set $X_n$, and $
\sigma$ a face if $$B(x_i,r/2) \cap B(x_j,r/2) \neq \emptyset$$ for every pair $x_i,x_j \in \sigma$.\\
\end{definition}
Equivalently, the random Vietoris-Rips complex is the clique complex of $G(X_n;r)$.

We are interested in the topological properties, in particular the vanishing and non-vanishing, and expected rank of homology groups, 
of the random \v{C}ech and Vietoris-Rips complexes, as $r$ varies. Qualitatively speaking, the two kinds of complexes behave very 
similarly. However there are important quantitative differences and one of the goals of this article is to point these out.

Throughout this article, we use Bachmann-Landau big-$O$, little-$O$, and related notations. In particular, for non-negative functions 
$g$ and $h$, we write the following.

\begin{itemize}
\item $g(n) = O(h(n))$ means that there exists $n_0$ and $k$ such that for $n > n_0$, we have that $g(n) \le k \cdot h(n)$. (i.e.\ $g$ is 
asymptotically bounded above by $h$, up to a constant factor.) 
\item $g(n) = \Omega(h(n))$ means that there exists $n_0$ and $k$ such that for $n > n_0$, we have that  $g(n) \ge k \cdot h(n)$. (i.e.
\ $g$ is asymptotically bounded below by $h$, up to a constant factor.) 
\item $g(n) = \Theta(h(n))$ means that $g(n) = O(h(n))$ and $g(n) = \Omega(h(n))$. (i.e.\ $g$ is asymptotically bounded above and 
below by $h$, up to constant factors.)
\item $g(n) = o(h(n))$ means that for every $\epsilon > 0$, there exists $n_0$ such that for $n > n_0$, we have that  $g(n) \le \epsilon 
\cdot h(n)$. (i.e.\ $g$ is dominated by $h$ asymptotically.)
\item $g(n) = \omega(h(n))$ means that for every $k >0$, there exists $n_0$ such that for $n > n_0$, we have that  $g(n) \ge k \cdot h
(n)$. (i.e.\ $g$ dominates $h$ asymptotically.)
\end{itemize}

When we discuss homology $H_k$ we mean either simplicial homology or singular homology, which are isomorphic.  Our results 
hold with coefficients taken over any field.

Finally, we use $\mu(S)$ to denote Lebesgue measure for any measurable set $S \subset \R^d$, and $\| x \|$ to denote the Euclidean 
norm of $x \in \R^d$.

\section{Summary of results}

It is known from the theory of random geometric graphs \cite{Penrose} that there are four main regimes of parameter (sometimes 
called {\it regimes}), with qualitatively different behavior in each. The same is true for the higher dimensional random complexes we 
build on these graphs. The following is a brief summary of our results.

In the subcritical and critical regimes, our results hold fairly generally, for any distribution on $\R^d$ with a bounded measurable 
density function.

In the subcritical regime, $r =o( n^{-1/d})$, the random geometric graph $G(X_n ; r)$ (and hence the simplicial complexes we are 
interested in) consists of many disconnected pieces. Here we exhibit a threshold for $H_k$, from vanishing to non-vanishing, and 
provide an asymptotic formula for the $k$th Betti number $E[\beta_k]$, for $k \ge 1$. 

In the critical regime, $r = \Theta (n^{-1/d})$, the components of the random geometric graph start to connect up and the giant 
component emerges. In other words, this is the regime wherein percolation occurs, and it is sometimes called the thermodynamic limit. 
Here we show that $E[\beta_k]=\Theta(n)$ and $\var[\beta_k]=\Theta(n)$ for every $k$.

The results in the subcritical and critical regimes hold fairly generally, for any distribution on $\R^d$ with a bounded measurable 
density function. In the supercritical and connected regimes, our results are for uniform distributions on smoothly bounded convex 
bodies in dimension $d$.

In the supercritical regime, $r = \omega( n^{-1/d})$. We put an upper bound on $E[\beta_k]$ to show that it grows sub-linearly, so the 
linear growth of the Betti numbers in the critical regime is maximal. Here our results are for the Vietoris-Rips complex, and the 
method is a Morse-theoretic argument.  The combination of geometric probability and discrete Morse theory used for these bounds is 
the main technical contribution of the article.

The connected regime, $r = \Omega(( \log{n} / n)^{1/d})$, is where $G(X_n;r)$ is known to become connected \cite{Penrose}. In 
this case we show that the \v{C}ech complex is contractible and the Vietoris-Rips complex is approximately contractible, in 
the sense that it is $k$-connected for any fixed $k$.  (This means that the homotopy groups $\pi_i$ vanish for $i \le k$, which implies
in turn that the homology groups $H_i$ vanish for $i \le k$ as well.)\\

Despite non-monotonicity, we are able to exhibit thresholds for vanishing of $H_k$.  For every $k \ge 1$, there is an interval in which $H_k \neq 0$ and outside of which $H_k = 0$, so every higher homology group passes through two thresholds.

The rest of the article is organized as follows. In Section \ref{sect:sub} we consider the subcritical regime of radius, in Section \ref
{sect:crit} the critical regime, in Section \ref{sect:sup} the supercritical regime, and in Section \ref{sect:con} the connected regime. In 
Sections \ref{sect:sup} and \ref{sect:con}  we assume that the underlying distribution is uniform on a smoothly bounded convex body 
mostly as a matter of convenience, but similar methods should apply in a more general setting. In Section \ref{sect:further} we discuss 
open problems and future directions.

\section{Subcritical} \label{sect:sub}

In this regime, we exhibit a vanishing to non-vanishing threshold for homology $H_k$, and in the non-vanishing regime compute the 
asymptotic expectation of the Betti numbers $\beta_k$, for $k \ge 1$. (The case $k=0$, the number of path components, is examined 
in careful detail by Penrose \cite{Penrose}, Ch. 13.) As a corollary, we also obtain information about the threshold where homology 
passes from vanishing to non-vanishing homology. We emphasize that the results in this section do not depend in any essential way on 
the distribution on $\R^d$, although we make the mild assumption that the underlying density function $f$ is bounded and 
measurable.

\subsection{Expectation}

\begin{theorem}\label{expect_Betti_Rips} [Expectation of Betti numbers, Vietoris-Rips complex] For $d \ge 2$, $k \ge 1$, $
\epsilon>0$, and $r _n = O(n^{-1/d - \epsilon})$, the expectation of the $k$th Betti number $E[\beta_k]$ of the random Vietoris-Rips 
complex $R(X_n;r)$ satisfies $$ \frac{E[\beta_k]}{n^{2k+2} r^{d(2k+1)}}  \to C_k,$$ as $n \to \infty$ where $C_k$ is a 
constant that depends only on $k$ and the underlying density function $f$.
\end{theorem}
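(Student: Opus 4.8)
The plan is to combine two facts: Betti numbers are additive over connected components, and in this very sparse regime ($nr^d = O(n^{-d\epsilon}) \to 0$) the components of $R(X_n;r)$ are with high probability small. The dominant contribution to $\beta_k$ will come from the components that are the \emph{cheapest} carriers of nonzero $k$-dimensional homology in a flag complex. The combinatorial input I would isolate first is the following: \emph{a flag complex on at most $2k+1$ vertices has $\Homology_k = 0$, and the only flag complex on exactly $2k+2$ vertices with $\Homology_k \neq 0$ is the boundary $\Sigma_k$ of the $(k+1)$-dimensional cross-polytope, i.e.\ the clique complex of the complete multipartite graph $K_{2,2,\dots,2}$ ($k+1$ parts of size $2$), for which $\beta_k = 1$.} For $k=1$ this is just the statement that the minimal induced cycle is a chordless $4$-cycle $C_4$. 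Granting this, I write $\beta_k = \sum_G \beta_k(G)$ over the connected components $G$ of $R(X_n;r)$ and separate the components isomorphic to $\Sigma_k$ (each contributing exactly $1$) from the rest; by the lemma, components on $\le 2k+1$ vertices contribute nothing and components on exactly $2k+2$ vertices contribute only through copies of $\Sigma_k$.

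For the main term I would count, by linearity of expectation as in Penrose's treatment of the $k=0$ component counts, the expected number $N_\Sigma$ of isolated induced copies of $\Sigma_k$:
$$ E[N_\Sigma] = \binom{n}{2k+2}\, \prob\big[\{x_1,\dots,x_{2k+2}\}\ \text{is an isolated induced}\ \Sigma_k\big]. $$
To evaluate the probability I fix $x_1 = x$, substitute $x_i = x + r z_i$ for $2 \le i \le 2k+2$, and integrate. The edge/non-edge pattern defining $\Sigma_k$ (all pairs within distance $r$ except the $k+1$ antipodal pairs) becomes a scale-free constraint on the $z_i$, and the Jacobian contributes exactly $r^{d(2k+1)}$. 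Since $f(x + r z_i) \to f(x)$ as $r \to 0$, the integral factors in the limit as
$$ \prob[\cdots] \sim r^{d(2k+1)}\, D_k \int_{\R^d} f(x)^{2k+2}\, dx, $$
where $D_k$ is the finite volume of admissible unit-scale configurations $(z_2,\dots,z_{2k+2})$. Together with $\binom{n}{2k+2} \sim n^{2k+2}/(2k+2)!$ this identifies the constant $C_k = \frac{D_k}{(2k+2)!}\int_{\R^d} f^{2k+2}$, which depends only on $k$, $d$, and $f$.

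Two error estimates remain. First, the isolation condition: a configuration of diameter $O(r)$ fails to be isolated only if one of the other $n - 2k - 2$ points lands within distance $r$, an event of probability $O(nr^d)$; since $r = O(n^{-1/d - \epsilon})$ forces $nr^d = O(n^{-d\epsilon}) \to 0$, the conditional probability of isolation tends to $1$ and does not disturb $C_k$. Second, and this is where I expect the real work, I must show that components with at least $2k+3$ vertices contribute only $o(n^{2k+2} r^{d(2k+1)})$ to $E[\beta_k]$. Here I bound $\beta_k(G)$ crudely by its number of $k$-faces, a function of the vertex count $m$, while the expected number of connected (hence spanning-tree-carrying) subconfigurations on $m$ vertices scales like $n^m r^{d(m-1)}$; the ratio of the $m = 2k+3$ term to the main term is thus $\Theta(nr^d) \to 0$.

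The main obstacle is to make the last estimate uniform: I must sum these bounds over all sizes $m$ from $2k+3$ up to $n$ and confirm the tail is negligible, which requires a standard but delicate subcritical-regime control on the number of connected configurations of each size (for instance counting spanning-tree shapes and using $nr^d \to 0$ to dominate the series). Assembling the sharp main term $E[N_\Sigma] \sim C_k\, n^{2k+2} r^{d(2k+1)}$ with this $o(\cdot)$ remainder yields $E[\beta_k]/(n^{2k+2} r^{d(2k+1)}) \to C_k$, as claimed.
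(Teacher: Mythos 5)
Your proposal is correct and follows the same skeleton as the paper's proof: the same flag-complex lemma identifying the cross-polytope boundary $O_k$ as the unique vertex-minimal carrier of $H_k$, the same decomposition of $\beta_k$ over connected components, and the same main term (the expected number of components isomorphic to the $1$-skeleton of $O_k$, which is Penrose's component-count asymptotics, re-derived in your write-up by direct integration). Where you genuinely diverge is in the error term, precisely the step you flag as the main obstacle. You bound the contribution of components on $m \ge 2k+3$ vertices size by size, and must then sum a series over all $m$ up to $n$, controlling the number of connected configurations of each size via spanning-tree counts and the geometric decay in $nr^d$. The paper avoids this tail sum entirely with a small combinatorial device: every $k$-face lying in a component with at least $2k+3$ vertices is extended, inside its component, to a connected subgraph on exactly $2k+3$ vertices and ${k+1 \choose 2}+k+2$ edges; hence $f_k^{\ge 2k+3}$ is bounded by a constant times the number of subgraphs isomorphic to one of finitely many graphs on exactly $2k+3$ vertices, and a single application of Penrose's fixed-size subgraph expectation theorem gives $O(n^{2k+3}r^{d(2k+2)})$, which is $o(n^{2k+2}r^{d(2k+1)})$ since $nr^d \to 0$. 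So the paper buys away the uniformity-in-$m$ issue at the cost of one counting trick, while your route is more self-contained but leaves the delicate series estimate still to be written out (it does go through in the subcritical regime; the needed component-size bounds are in Penrose's Chapter 3). One point you should add in any case: to know that the limiting constant is strictly positive --- which is what makes the theorem useful downstream, e.g.\ for the non-vanishing threshold --- you must check that the $1$-skeleton of $O_k$ is geometrically feasible in $\R^d$, i.e.\ that your configuration volume $D_k$ is nonzero. This is not obvious, especially for $d=2$, and the paper verifies it by exhibiting the regular $2k$-gons as realizations. Without that check your argument still proves convergence, but cannot rule out $C_k = 0$.
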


(We note that this result holds for all $k$, even when $k \ge  d$.)

Using similar methods, we also prove the following about the random \v{C}ech complex.

\begin{theorem}\label{expect_Betti-Cech} [Expectation of Betti numbers, \v{C}ech complex] For $d \ge 2$, $1 \le k \le d-1$, $
\epsilon>0$, and $r = O(n^{-1/d - \epsilon})$, the expectation of the $k$th Betti number $E[\beta_k]$ of the random \v{C}ech 
complex $C(X_n;r)$ satisfies $$ \frac{E[\beta_k]}{n^{k+2} r^{d(k+1)}}  \to D_k,$$ as $n \to \infty$ where $D_k$ is a constant 
that depends only on $k$ and the underlying density function $f$. \end{theorem}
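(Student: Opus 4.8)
The plan is to compute $E[\beta_k]$ for the Čech complex by the same strategy that presumably drives the Vietoris--Rips result (Theorem~\ref{expect_Betti_Rips}): in the subcritical regime the complex splinters into tiny clusters, so the dominant contribution to $\beta_k$ comes from the \emph{smallest} configurations of points that can carry nontrivial $k$-dimensional homology, each sitting on its own connected component. First I would identify this minimal configuration. For the Čech complex (a nerve of balls), the boundary of the standard $(k+1)$-simplex—i.e.\ $k+2$ balls whose pairwise-and-higher intersections realize $\partial\Delta^{k+1}\cong S^k$—gives a component with $\beta_k=1$ on exactly $k+2$ vertices. This is why the exponent on $n$ is $k+2$ here, as opposed to $2k+2$ in the Rips case: the clique complex needs roughly $2k+2$ vertices (two "antipodal" simplices) to produce a $k$-cycle, whereas the nerve achieves it with only $k+2$. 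The restriction $k\le d-1$ is exactly the condition that such a spherical nerve is geometrically realizable by balls in $\R^d$.

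Next I would set up the expectation as a sum over ordered $(k+2)$-subsets of the points. Writing $\beta_k = \sum_S \mathbf{1}[S \text{ forms an isolated } S^k\text{-component}]$ plus lower-order corrections, linearity of expectation gives
\[
E[\beta_k] = \binom{n}{k+2}\,\prob(\,k+2 \text{ i.i.d.\ points form such a component}\,).
\]
The probability factors into two parts: a local part, the chance that the $k+2$ points lie in the correct relative geometric position (all within distance $\sim r$ so their balls intersect in the pattern of $\partial\Delta^{k+1}$), and a global isolation part, the chance that no other of the remaining $n-k-2$ points lands near the cluster (so the component really is isolated). The local configuration, after rescaling coordinates by $r$, contributes a factor $r^{d(k+1)}$ because we integrate out $k+1$ relative position vectors in $\R^d$ (one vertex is fixed), against the density $f$; as $r\to 0$ the density is locally constant, so the integral converges to a constant times $\int_{\R^d} f(x)^{k+2}\,dx$. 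Together with $\binom{n}{k+2}\sim n^{k+2}/(k+2)!$ this produces the claimed scaling $n^{k+2} r^{d(k+1)}$, and pins down $D_k$ as this geometric integral divided by $(k+2)!$ times the volume of the space of valid shapes.

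The main obstacle is controlling the error terms and justifying that the minimal configurations dominate. Two things must be checked carefully. First, the isolation probability: one must show $\prob(\text{no extra point nearby}) \to 1$ in the subcritical regime, which follows because the expected number of points within $O(r)$ of a fixed cluster is $O(nr^d) = o(1)$ when $r = O(n^{-1/d-\epsilon})$—this is precisely where the hypothesis $r=O(n^{-1/d-\epsilon})$ is used. Second, and harder, one must show that configurations on \emph{more} than $k+2$ vertices contribute only lower-order terms to $E[\beta_k]$: a component on $m$ vertices carries an extra factor of roughly $n\,r^d = o(1)$ per additional vertex relative to its potential Betti number, so the $(k+2)$-vertex term wins. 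Establishing a uniform bound of this kind—ideally via a Morse-theoretic or inclusion–exclusion estimate on how much homology a component of given size can support—is the delicate step, and I expect the bulk of the work there to mirror, with the smaller vertex count $k+2$, whatever counting argument underlies the Vietoris--Rips theorem. The restriction to $k \le d-1$ should also enter here, since for $k \ge d$ no isolated Čech component can carry $H_k$ at all (the nerve of balls in $\R^d$ has the homotopy type of a subset of $\R^d$), which is why the Čech theorem, unlike the Rips one, caps $k$ at $d-1$.
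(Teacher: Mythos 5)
Your proposal is correct and takes essentially the same approach as the paper: sandwich $\beta_k$ between the number of connected components isomorphic to the boundary of a $(k+1)$-dimensional simplex (the vertex-minimal support for nontrivial $H_k$ in a \v{C}ech complex, which is exactly where the exponent $k+2$ and the restriction $k \le d-1$ come from) and that count plus the number of $k$-faces on components with at least $k+3$ vertices, then kill the error term using $nr^d \to 0$. The only real difference is bookkeeping: where you sketch the Penrose-style rescaled integral for the component count directly, the paper cites the hypergraph analogue of Penrose's subgraph-count theorem from Kahle--Meckes, which is needed precisely because \v{C}ech faces depend on higher-order ball intersections and so are not determined by the underlying geometric graph---a subtlety your configuration-level computation implicitly handles.
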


One feature that distinguishes the \v{C}ech complex from the Vietoris-Rips complex is that a {C}ech complex is always
homotopy equivalent to whatever it covers (this follows form the nerve theorem, i.e.\ Theorem 10.7 in \cite{Bjorner}).
So in particular $H_k=0$ when  $k \ge d$.

In both cases we will see that almost all of the homology is contributed
from a single source: whatever is the smallest possible vertex support for nontrivial homology.  For the Vietoris-Rips
complex this will be the boundary of the cross-polytope, and for the \v{C}ech complex the empty simplex.

\begin{definition}
The $(k+1)$-dimensional {\it cross-polytope} is
defined to be the convex hull of the $2k+2$ points $\{ \pm e_i
\}$, where $e_1, e_2, \ldots, e_{k+1}$ are the standard basis vectors of
$\R^{k+1}$. The boundary of this polytope is a $k$-dimensional simplicial
complex, denoted $O_{k}$.
\end{definition}

Simplicial complexes which arise as clique complexes of graphs are
sometimes called {\it flag complexes}.  A useful fact in
combinatorial topology is the following; for a proof see
\cite{Kahle_clique}.

\begin{lemma} \label{octa} 
If $\Delta$ is a flag complex, then any nontrivial element of $k$-dimensional
homology $H_k(\Delta)$ is supported on a subcomplex $S$ with at least
$2k+2$ vertices. Moreover, if $S$ has exactly $2k+2$ vertices, then
$S$ is isomorphic to $O_k$.
\end{lemma}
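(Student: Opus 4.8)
The plan is to prove the statement by induction on $k$, working with reduced homology $\Homology$ throughout (which agrees with $H_k$ for $k \ge 1$; the distinction only serves to make the base case clean). For the base case $k=0$, a nontrivial class in $\Homology_0$ means $\Delta$ is disconnected, so any supporting subcomplex has at least $2 = 2\cdot 0 + 2$ vertices; and a supporting subcomplex on exactly two vertices must be a pair of isolated points, which is precisely $O_0$.

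For the inductive step, fix a nonzero $\alpha \in \Homology_k(\Delta)$ with $k \ge 1$. Among all representing cycles, I choose one whose vertex support $W$ is smallest, and set $S = \Delta[W]$, the induced subcomplex; since an induced subcomplex of a flag complex is again flag, $S$ is a flag complex and $\alpha$ is the image of a nonzero class $\alpha' \in \Homology_k(S)$. The key device is, for each vertex $v \in W$, the Mayer--Vietoris sequence for the decomposition $S = \Star(v) \cup \Delta[W \setminus v]$, whose intersection is $\link(v)$. Since the closed star $\Star(v)$ is a cone, it is contractible, so the sequence yields a connecting map $\Homology_k(S) \to \Homology_{k-1}(\link(v))$ whose kernel is exactly the image of $\Homology_k(\Delta[W\setminus v])$. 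Minimality of $W$ forces $\alpha'$ to lie outside this image --- otherwise $\alpha$ would already be supported on fewer vertices --- so $\alpha'$ maps to a nonzero class in $\Homology_{k-1}(\link(v))$.

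Now I apply the induction hypothesis to $\link(v)$, which is the clique complex of the neighbors of $v$ and hence again flag: carrying a nonzero $(k-1)$-class, it has at least $2(k-1)+2 = 2k$ vertices, so $v$ has at least $2k$ neighbors in $S$. Thus every vertex of the underlying graph has degree at least $2k$, giving $|W| \ge 2k+1$ immediately. To rule out $|W| = 2k+1$, note that this would force the graph to be complete, making $S$ a simplex and hence contractible, contradicting $\alpha' \ne 0$; therefore $|W| \ge 2k+2$. For the equality case $|W| = 2k+2$, I first observe that no vertex can be adjacent to all others, since such a vertex would make $S$ a cone and again contractible; hence every vertex has degree exactly $2k$ and is non-adjacent to exactly one other vertex. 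The non-adjacency relation is therefore a perfect matching on the $2k+2$ vertices, so the graph is the complete graph minus a perfect matching, whose clique complex is exactly the boundary of the cross-polytope $O_k$.

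The main obstacle is the bookkeeping in the inductive step rather than any single hard idea: one must verify carefully that minimality of the vertex support translates into the connecting homomorphism being nonzero on $\alpha'$ (this is where the flag hypothesis and the contractibility of the star are both essential), and one must keep the reduced homology consistent across the drop from $k$ to $k-1$, in particular at $k=1$ where the relevant link statement is simply that a nonzero $\Homology_0$ means the link is disconnected. Once the cone-and-simplex contractibility observations are in place, the equality analysis reduces to elementary graph theory.
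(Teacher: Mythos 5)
Your proof is correct, and it is essentially the argument the paper relies on: the paper gives no proof itself but cites \cite{Kahle_clique}, where the lemma is proved by the same induction on $k$ via the Mayer--Vietoris sequence for the decomposition into the (contractible) closed star of a vertex and the induced subcomplex on the remaining vertices, with intersection the link, followed by the identical degree-count, complete-graph, and perfect-matching endgame. The only cosmetic difference is that the cited proof applies minimality to a vertex-minimal flag complex with $\Homology_k \neq 0$, whereas you apply it to a support-minimal representing cycle inside $\Delta$, which is precisely what the ``supported on'' formulation stated here requires.
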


We also use results for expected subgraph counts in geometric random graphs.

Recall that a subgraph $H \le G$ is said to be an {\it induced} subgraph if for every pair of vertices $x,y \in V(H)$, we have $\{x,y\}$ 
is an edge of $H$ if and only if $\{ x,y \}$ is an edge of $G$. 

\begin{definition}A connected graph is {\it feasible} if it is geometrically realizable as an induced subgraph.
\end{definition}
For example the complete bipartite graph $K_{1,7}$ is not feasible, since it is not geometrically realizable as an induced subgraph of 
a geometric graph in $\R^2$, since there must be at least one edge between the seven degree-one vertices. 

Denote the number of induced subgraphs of $G(X_n;r)$ isomorphic to $H$ by $G_n(H)$, and the number of components isomorphic to $H$ by $J_n(H)$. Recall that $f$ is the 
underlying density function. For a feasible subgraph $H$ of order $k$, and $\mathcal{Y} \in (\R^d)^k$ define the indicator function 
$h_H(\mathcal{Y})$ on sets $\mathcal{Y}$ of $k$ elements in $\R^d$ by $h_H(\mathcal(Y))=1$ if the geometric graph $G(Y,1)$ is 
isomorphic to $H$, and $0$ otherwise. Let $$\mu_H = k!^{-1} \int_{\R^d} f(x)^k dx \int_{(\R^d)^{k-1}}h_H(\{ 0, x_1, \ldots, x_
{k-1} \} ) d(x_1,\ldots x_{k-1}).$$ Penrose proved the following \cite{Penrose}.

\begin{theorem}[Expectation of subgraph counts, Penrose] \label{expect_sub} Suppose that $\lim_{n \to \infty} (r) =0$, and $H$ 
is a connected feasible graph of order $k \ge 2$. Then $$\lim_{n \to \infty} r ^ {-d(k-1)}n^{-k} E(G_n(H)) = \lim_{n \to \infty} r 
^ {-d(k-1)}n^{-k} E(J_n(H)) = \mu_H. $$
\end{theorem}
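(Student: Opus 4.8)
The plan is a direct first-moment computation; since the statement concerns only expectations, no variance or concentration estimate is needed. By linearity of expectation and the fact that $x_1,\dots,x_n$ are i.i.d., summing over the $\binom{n}{k}$ unordered $k$-subsets of $X_n$ gives
\[
E(G_n(H)) = \binom{n}{k}\,\prob\bigl(G(\{x_1,\dots,x_k\};r)\cong H\bigr),
\]
with no automorphism factor, since we count subsets rather than labeled copies. Writing this probability as an integral against the density and letting $h_H^{(r)}$ denote the indicator that $G(\{y_1,\dots,y_k\};r)\cong H$,
\[
\prob\bigl(\cdots\bigr) = \int_{(\R^d)^k} h_H^{(r)}(y_1,\dots,y_k)\prod_{i=1}^k f(y_i)\,dy_1\cdots dy_k .
\]

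First I would exploit the scale- and translation-invariance of the geometric graph structure. The substitution $y_1=x$ and $y_j=x+r\,u_{j-1}$ for $j=2,\dots,k$ has Jacobian $r^{d(k-1)}$, and the edge $\{y_i,y_j\}$ is present precisely when the corresponding edge of the rescaled configuration $\{0,u_1,\dots,u_{k-1}\}$ is present at threshold $1$; hence $h_H^{(r)}=h_H(\{0,u_1,\dots,u_{k-1}\})$. This yields
\[
r^{-d(k-1)}\prob\bigl(\cdots\bigr) = \int_{(\R^d)^{k-1}} h_H(\{0,u_1,\dots,u_{k-1}\})\Bigl(\int_{\R^d} f(x)\prod_{j=1}^{k-1} f(x+r u_j)\,dx\Bigr)du .
\]
Because $H$ is connected, $h_H(\{0,u_1,\dots,u_{k-1}\})$ vanishes unless every $u_j$ lies within Euclidean distance $k-1$ of the origin, so the outer integration runs over a bounded region.

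The crux is the convergence of the inner integral to $\int_{\R^d} f^k$ as $r\to 0$ for a merely bounded measurable $f$ (continuity is not assumed). I would fix $u$ and telescope: with $\|f\|_\infty\le M$,
\[
\Bigl|\int f(x)\prod_{j=1}^{k-1} f(x+ru_j)\,dx - \int f(x)^k\,dx\Bigr| \le M^{k-1}\sum_{j=1}^{k-1}\bigl\|f(\cdot+ru_j)-f\bigr\|_{L^1},
\]
which tends to $0$ by continuity of translation in $L^1$. Since the inner integral is uniformly bounded by $M^{k-1}$ and the $u$-support is bounded, dominated convergence gives
\[
\lim_{r\to 0} r^{-d(k-1)}\prob\bigl(\cdots\bigr) = \int_{\R^d} f^k \int_{(\R^d)^{k-1}} h_H(\{0,u_1,\dots,u_{k-1}\})\,du .
\]
Combined with $n^{-k}\binom{n}{k}\to 1/k!$, this proves the claim for $G_n(H)$ with limit exactly $\mu_H$.

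For $J_n(H)$ I would insert the extra indicator that none of the remaining $n-k$ points falls in $\bigcup_{i=1}^k B(x_i,r)$. Conditioning on $x_1,\dots,x_k$ contributes a factor $\bigl(1-\int_{\bigcup_i B(x_i,r)} f\bigr)^{n-k}$; since the ball volumes give $\int_{\bigcup_i B(x_i,r)} f = O(r^d)$, this factor tends to $1$ uniformly exactly when $n r^d\to 0$, the defining feature of the subcritical regime in which the theorem is applied. Thus $E(J_n(H))$ shares the leading asymptotics of $E(G_n(H))$, and both scaled limits equal $\mu_H$. The main obstacle is the convergence of the density integral for non-continuous $f$; once the $L^1$-translation estimate is in place everything else is routine bookkeeping, with Tonelli justifying the interchanges since all integrands are nonnegative.
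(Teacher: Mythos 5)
Your proposal addresses a statement the paper itself never proves: Theorem \ref{expect_sub} is quoted from Penrose's book \cite{Penrose}, so there is no internal proof to compare against. Your argument for the induced-subgraph count $G_n(H)$ is essentially the standard (and essentially Penrose's) one, and it is correct: exchangeability reduces $E(G_n(H))$ to $\binom{n}{k}$ times a single probability; the substitution $y_1 = x$, $y_j = x + r u_{j-1}$ with Jacobian $r^{d(k-1)}$ converts the threshold-$r$ indicator into the threshold-$1$ indicator $h_H(\{0,u_1,\dots,u_{k-1}\})$; connectivity of $H$ confines the $u$-variables to a bounded region; and your telescoping bound $M^{k-1}\sum_j \|f(\cdot+ru_j)-f\|_{L^1}$, using continuity of translation in $L^1$, correctly handles bounded measurable $f$ with no continuity assumption. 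Together with $n^{-k}\binom{n}{k}\to 1/k!$ this gives exactly $\mu_H$, including the $k!^{-1}$ normalization.

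The only substantive issue concerns the component count $J_n(H)$, and it is an issue with the paper's transcription rather than with your proof. As you note, your vacancy factor $\bigl(1-\int_{\bigcup_i B(x_i,r)} f\bigr)^{n-k}$ tends to $1$ only when $nr^d \to 0$; under the stated hypothesis $r \to 0$ alone, the $J_n$ half of the theorem is in fact false. In the thermodynamic limit $nr^d \to \lambda \in (0,\infty)$ that factor converges to an exponential of order $e^{-c\lambda}$, bounded away from $1$ wherever $f>0$, so $\lim r^{-d(k-1)}n^{-k}E(J_n(H))$ exists but is strictly smaller than $\mu_H$; Penrose's component-count asymptotics in that regime carry precisely this extra exponential factor. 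The clean $\mu_H$ limit for $J_n$ holds under the sparse-limit hypothesis $nr^d \to 0$, which is also the only regime where the paper uses the precise constant (in the critical regime it invokes only $\Theta(n)$ bounds, which survive the extra factor). So your proof, with the hypothesis $nr^d \to 0$ made explicit for the component-count half, establishes the statement in the form in which it is actually used, and you were right to flag that hypothesis rather than pretend the claim holds for all $r \to 0$.
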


Together with our topological and combinatorial tools, Theorem \ref{expect_sub} will be sufficient to prove Theorem \ref
{expect_Betti_Rips}.  To prove Theorem \ref{expect_Betti-Cech} we also require a hypergraph analogue of Theorem \ref
{expect_sub}, established by the author and Meckes in Section 3 of \cite{Meckes}, which we state when it is needed.\\
\medskip

\begin{proof}[Proof of Theorem \ref{expect_Betti_Rips}]

The intuition is that in the sparse regime, almost all of the homology is contributed by vertex-minimal spheres.

\begin{definition} 
For a simplicial complex $\Delta$, let $o_k(\Delta)$ (or $o_k$ if context is clear) denote the number of
induced subgraphs of $\Delta$ 
combinatorially isomorphic to the
$1$-skeleton of the cross-polytope $O_k$, and let $\widetilde{o}_k(
\Delta)$ denote the 
number of components of $\Delta$ combinatorially isomorphic to the
$1$-skeleton of the cross-polytope $O_k$.
\end{definition}

\begin{definition} 
Let $f_k^{= i}(\Delta)$ denote the number of $k$-dimensional faces on
connected components with exactly $i$ vertices.
Similarly,
let $f_k^{\ge i}(\Delta)$ denote the number of $k$-dimensional faces
on connected components containing at least $i$ vertices.
\end{definition}

%
%
%
%
%
 
A dimension bound paired with Lemma \ref{octa} yields 
\begin{equation}\label{octo-morse}
\widetilde{o}_k \le \beta_k \le \widetilde{o}_k + f_k^{\ge 2k+3}.
\end{equation}

One could work with $f_k^{\ge 2k+3}$ directly, but it turns out to be sufficient 
to overestimate $f_k^{\ge 2k+3}$ as follows.  For each 
$k$-dimensional face in a component
with at least $2k+3$ vertices, extend to a connected
subgraph with exactly $2k+3$ vertices and ${k+1 \choose 2} + k+2$
edges.

For example, let $k=2$; then  
 \begin{equation} \label{eqn_squeeze}
 \widetilde{o}_2 \le \beta_2 \le \widetilde{o}_2 + f_2^{\ge 7}.
 \end{equation}
   Up to isomorphism, the seventeen graphs that arise when
extending a $2$-dimensional face (i.e.\ a $3$-clique) to a minimal
connected graph on $7$ vertices are exhibited in Figure
\ref{fig:betti2}.

In particular, $f_2^{\ge 7} \le \sum_{i=1}^{17} s_i,$ where
$s_i$ counts the number of subgraphs isomorphic to graph $i$ for some
indexing of the seventeen graphs in Figure \ref{fig:betti2}.

\begin{figure}
\begin{centering}
\includegraphics[width=4.5in]{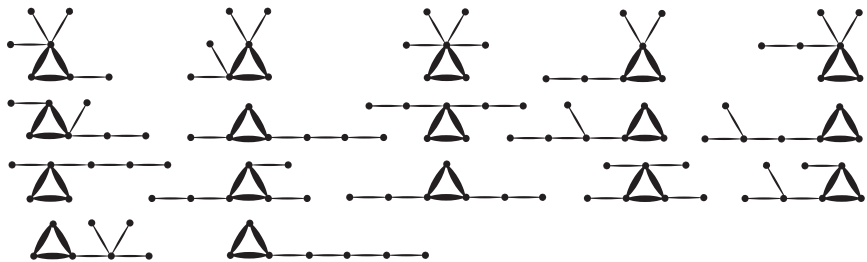}
\end{centering}
\caption{The case $k=2$: the seventeen isomorphism types of subgraphs
  which arise when extending a $3$-clique to a connected graph on $7$
  vertices with $7$ edges.  Each
  subgraph isomorphic to one of these can contribute at most $1$
  to the sum bounding the error term $f_2^{\ge 7}$.}
\label{fig:betti2}
\end{figure}

Moreover, as noted in \cite{Penrose}, 
the number of occurences of a given subgraph $\Gamma$ on $v$ vertices is 
a positive linear combination of the induced subgraph counts for those
graphs on $v$ vertices which have $\Gamma$ as a subgraph.

For an example of this, let $G_{H}$ denote the number of induced subgraphs of $G$ isomorphic to $H$, and let $\widetilde{G}_{H}$ denote
the number of subgraphs (not necessarily induced) of $G$ isomorphic to $H$.
If $P_3$ is the path on $3$ vertices and $K_3$ is the complete graph on $3$ vertices, then
$$\widetilde{G}_{P_3} = 3 G_{K_3} + G_{P_{3}}.$$ 

So for each $i$ we can write $s_i$ as a positive linear combination of induced subgraph counts, and every type of induced subgraphs has exactly $7$ vertices.  

We take expectation of both sides of Equation \ref{eqn_squeeze}, applying linearity of expectation, to obtain
\begin{align*}
E[\widetilde{o}_2]  \le E[\beta_2] & \le E[\widetilde{o_2}]+E[f_2^{\ge 7}] \\
& \le E[\widetilde{o_2}]+E[ \sum_{i=1}^{17} s_i]\\
& \le E[\widetilde{o_2}]+ \sum_{i=1}^{17} E[s_i].
\end{align*}

For each $i$, $E[s_i] = O(n^7 r^{6d})$, by Theorem \ref{expect_sub}.  On the other hand, $E[\widetilde{o_2}] = \Theta (n^6 r^{5d})$, also by Theorem \ref{expect_sub}.
Since we are assuming that $n r^d \to 0$ as $n \to \infty$, we have shown that $E[f_2^{\ge 7}]=o(E[\widetilde{o}_2] )$.  We conclude that $E[\beta_2] / E[\widetilde{o}_2]  \to 1$ as $n \to \infty$.  This gives $E[\beta_2]=\Theta (n^6 r^{5d})$, as desired.

The proof for $k \ge 2$ is the same.  In general the number of graphs on $2k+3$ vertices that can 
arise from the algorithm above is a constant that only depends on $k$, so denote this constant by $c_k$.

So in general we will have 
\begin{align*}
E[\widetilde{o}_k]  \le E[\beta_k] & \le E[\widetilde{o_k}]+E[f_k^{\ge 2k+3}] \\
& \le E[\widetilde{o_k}]+E[ \sum_{i=1}^{c_k} s_i]\\
& \le E[\widetilde{o_2}]+ \sum_{i=1}^{c_k} E[s_i].
\end{align*}

For each $i =1, 2, \ldots, c_k$ we have $$E[s_i] = O(n^{2k+3} r^{(2k+2)d}),$$ and on the other hand $$E[\widetilde{o}_k] = \Theta(n^{2k+2}r^{(2k+1)d}).$$
Since $nr^d \to 0$, we conclude that $E[\beta_k] / E[\widetilde{o}_k]  \to 1$, and $$E[\beta_k]= \Theta (n^{2k+2} r^{(2k+1)d}).$$

The case $k=1$ is slightly different. There are several ways of extending a $2$-clique (i.e.\ an
edge) to a connected graph on $5$ vertices and $4$ edges.  In this
case the graph must be a tree, and there
are three isomorphism types of trees on five vertices, shown in
Figure \ref{fig:betti1}.  But in this case counting these subgraphs 
will result in an underestimate
for $f_1^{\ge 5}$.  However, each tree has only four edges, and so one can obtain the bound
 $$f_1^{\ge 5} \le 4 ( t_1 + t_2 + t_3),$$ where
$t_1, t_2, t_3$ count the number of subgraphs isomorphic to the three
trees in Figure \ref{fig:betti1}.  The argument is then the same as in the 
case $k\ge 2$.

\begin{figure}
\begin{centering}
\includegraphics[width=3.5in]{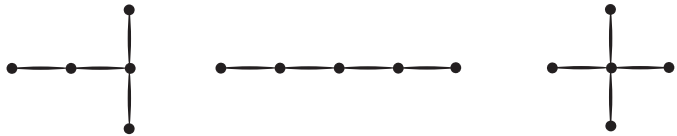}
\end{centering}
\caption{The case $k=1$: the three isomorphism types of trees on five vertices.  Each
  subgraph isomorphic to one of these can contribute at most $4$
  to the sum bounding the error term $f_1^{\ge 5}$.}
\label{fig:betti1}
\end{figure}

This completes the proof, modulo one small concern:  we must make sure that the octahedral $1$-skeletons are
geometrically feasible.  It is perhaps surprising that this is the case, even when $d=2$.  But the regular $2k$-gons
provide examples of geometic realizations of the $1$-skeleton of $O_k$ for every $k$, as in Figure \ref{fig:even}.
(This fact was previously noted by Chambers, de Silva, Erickson, and Ghrist in \cite{Rips_plane}.)

\begin{figure}
\begin{centering}
\includegraphics[width=4in]{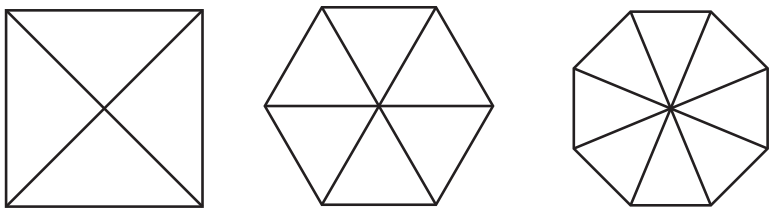}
\end{centering}
\caption{The regular $2k$-gons prove that the $1$-skeleton of the cross-poytope $O_k$ is
geometrically feasible in
the plane for every $k$. If $r$ is slightly shorter than the length of the main diagonal, components
combinatorially isomorphic to this contribute to $\beta_k$
in the Vietoris-Rips complex.}
\label{fig:even}
\end{figure}

\end{proof}

\begin{proof}[Proof of Theorem \ref{expect_Betti-Cech}] The argument for the \v{C}ech complex proceeds along the same lines, 
mutatis mutandis, but with one important difference.  Again the dominating contribution to $\beta_k$ will come from vertex-minimal 
$k$-dimensional spheres, but for a \v{C}ech complex the smallest possible vertex support that a simplicial complex with nontrivial 
$H_k$ can have is $k+2$ vertices, coming from the boundary of a $(k+1)$-dimensional simplex.

Let $\widetilde{S_k}$ denote the number of connected components isomorphic to 
the boundary of a $(k+1)$-dimensional simplex.  By the same argument as before we have
$$E[\widetilde{S_k}]  \le E[\beta_k]  \le E[\widetilde{S_k}]+E[f_k^{\ge k+3}] .$$

Deciding whether some set of $k+2$ vertices span the boundary of a $(k+1)$-dimensional simplex depends on 
higher intersections, so in particular when $k > 2$ the faces of the \v{C}ech complex are not determined by the underlying geometric graph.  It is proved in Section 3 of \cite{Meckes} that as long as $r = o(n^{-1/d})$ then $E[\widetilde{S_k}] = \Theta( n^{k+2}r^{(k+1)d})$.  On the other hand we have 
$E[f_k^{\ge k+3}] = O ( n^{k+3}r^{(k+2)d})$.  As before, since $r = o(n^{-1/d})$ this is enough to give that $$ \lim_{n \to \infty} E[\beta_k]  / E[\widetilde{S_k}]   = 1,$$ and then $E[\beta_k] = \Theta ( n^{k+2}r^{(k+1)d})$ as desired.

\end{proof}

\subsection{Vanishing / non-vanishing threshold}

To state the following theorems we assume that $d \ge 2$ and $k \ge 1$ are fixed and that $r$ is still in the sparse regime, i.e.\ that 
$r=o(n^{-1/d})$.

\begin{theorem} [Threshold for non-vanishing of $H_k$ in the random Vietoris-Rips complex] \label{lower-thresh-VR}

\

\begin{enumerate}

\item If $$r = o\left( n^{-\frac{2k+2}{d(2k+1) }}\right) ,$$ then a.a.s.\ $H_k(VR(n;r))=0$, and 

\item if  $$r = \omega \left( n^{-\frac{2k+2}{d(2k+1) }}\right) ,$$ then a.a.s.\ $ H_k(VR(n;r)) \neq 0$.

\end{enumerate} \end{theorem}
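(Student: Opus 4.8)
The plan is to handle the two statements by complementary moment methods: part (1) by a first-moment (Markov) argument and part (2) by a second-moment argument. The organizing observation is that the exponent $-\frac{2k+2}{d(2k+1)}$ is exactly the scale at which the expected number of vertex-minimal spheres is of constant order. Writing $r^* = n^{-(2k+2)/(d(2k+1))}$, one checks $n^{2k+2}(r^*)^{d(2k+1)} = 1$, so by Theorem \ref{expect_Betti_Rips} this is precisely where $E[\beta_k] = \Theta(1)$; the threshold separates $E[\beta_k] \to 0$ from $E[\beta_k] \to \infty$.

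For part (1), suppose $r = o(r^*)$. Since $\frac{2k+2}{d(2k+1)} = \frac1d + \frac{1}{d(2k+1)}$, the hypothesis places $r$ inside the range $r = O(n^{-1/d-\epsilon})$ demanded by Theorem \ref{expect_Betti_Rips} (take $\epsilon = \frac{1}{d(2k+1)}$), so $E[\beta_k] = \Theta(n^{2k+2} r^{d(2k+1)})$. As $r = o(r^*)$ this forces $n^{2k+2} r^{d(2k+1)} = o(1)$ and hence $E[\beta_k] \to 0$. Because $\beta_k$ is a nonnegative integer, Markov's inequality gives $\prob(\beta_k \ge 1) \le E[\beta_k] \to 0$, so a.a.s.\ $\beta_k = 0$, that is, $H_k(VR(n;r)) = 0$.

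For part (2), suppose $r = \omega(r^*)$ while still $r = o(n^{-1/d})$, as in the standing assumption of this subsection. Here I would not invoke Theorem \ref{expect_Betti_Rips} directly, since $r$ need not be $O(n^{-1/d-\epsilon})$; but the full asymptotics of $\beta_k$ are unnecessary. Instead I use the lower bound $\beta_k \ge \widetilde{o}_k$ from (\ref{octo-morse}), where $\widetilde{o}_k$ counts connected components isomorphic to the $1$-skeleton of the cross-polytope $O_k$. Since $O_k$ is geometrically feasible via the regular $2k$-gon construction of Figure \ref{fig:even} (so that the associated Penrose constant $\mu_{O_k}$ is strictly positive), Theorem \ref{expect_sub} applies as soon as $r \to 0$ and yields $E[\widetilde{o}_k] = \Theta(n^{2k+2} r^{d(2k+1)})$. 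Thus $r = \omega(r^*)$ forces $E[\widetilde{o}_k] \to \infty$. To upgrade this to $\widetilde{o}_k \ge 1$ a.a.s., I apply the second-moment method: by Chebyshev's inequality $\prob(\widetilde{o}_k = 0) \le \var(\widetilde{o}_k)/E[\widetilde{o}_k]^2$, so it suffices to prove $\var(\widetilde{o}_k) = o(E[\widetilde{o}_k]^2)$.

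The variance estimate is where I expect the genuine difficulty to lie. Writing $\widetilde{o}_k = \sum_S \mathbf{1}_S$ as a sum over $(2k+2)$-subsets $S \subseteq X_n$ of the indicator that $S$ is the vertex set of a component isomorphic to $O_k$, the diagonal contributes $\Theta(E[\widetilde{o}_k])$; overlapping pairs $S \ne S'$ with $S \cap S' \ne \emptyset$ contribute nothing to $E[\widetilde{o}_k^2]$, since a vertex lies in a unique component; and disjoint pairs contribute a term that, after subtracting $E[\widetilde{o}_k]^2$, leaves only a local correction of order $O(E[\widetilde{o}_k])$ coming from pairs whose convex hulls lie within distance $O(r)$ of each other. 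This is precisely the kind of bound established for component counts in the subcritical regime by Penrose \cite{Penrose} (Ch.\ 3), where such counts are shown to satisfy $\var(\widetilde{o}_k) = O(E[\widetilde{o}_k])$ and in fact to converge to a Poisson law; I would cite that machinery rather than reprove it. Granting $\var(\widetilde{o}_k) = O(E[\widetilde{o}_k])$, Chebyshev gives $\prob(\widetilde{o}_k = 0) = O(1/E[\widetilde{o}_k]) \to 0$, so a.a.s.\ $\widetilde{o}_k \ge 1$, whence $\beta_k \ge 1$ and $H_k(VR(n;r)) \ne 0$.
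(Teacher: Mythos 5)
Your proposal is correct, and while it uses the same moment-method philosophy as the paper, it deploys the two moments differently, and in part (2) your route is actually the more careful one. For part (1) the paper does not apply Markov's inequality to $E[\beta_k]$: it argues via Lemma \ref{octa} and Theorem \ref{expect_sub} that a.a.s.\ every connected component has fewer than $2k+2$ vertices, which in a flag complex cannot support a nontrivial class in $H_k$; your first-moment argument on $E[\beta_k]$ is equally valid, and you correctly verify the hypothesis of Theorem \ref{expect_Betti_Rips} by noting that $\frac{2k+2}{d(2k+1)} = \frac{1}{d} + \frac{1}{d(2k+1)}$. For part (2) the paper invokes Theorem \ref{expect_Betti_Rips} to get $E[\beta_k] \to \infty$ and then quotes the bound $\var[\beta_k] = \Theta(E[\beta_k])$ from Section 4 of \cite{Meckes}, finishing with Chebyshev on $\beta_k$ itself; you instead run the second moment on the component count $\widetilde{o}_k$, using $\beta_k \ge \widetilde{o}_k$, Penrose's Theorem \ref{expect_sub} (which needs only $r \to 0$), and Penrose's variance/Poisson machinery for component counts. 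This buys something real: Theorem \ref{expect_Betti_Rips} as stated requires $r = O(n^{-1/d-\epsilon})$ for a fixed $\epsilon > 0$, which does not cover the whole range $\omega(n^{-(2k+2)/(d(2k+1))}) \cap o(n^{-1/d})$ of part (2) (e.g.\ $r = n^{-1/d}/\log n$), so your refusal to invoke it directly identifies and repairs a small gap that the paper's proof glosses over. The trade-off is that the variance estimate is deferred to \cite{Penrose} rather than to \cite{Meckes}; your sketch of why $\var(\widetilde{o}_k) = O(E[\widetilde{o}_k])$ is sound in the sparse regime --- in particular the correction from well-separated disjoint pairs is of order $r^d E[\widetilde{o}_k]^2 = \Theta\left( (nr^d)^{2k+2} \right) E[\widetilde{o}_k] = o(E[\widetilde{o}_k])$ since $nr^d \to 0$ --- though, as you say, the clean statement is best quoted rather than reproved.
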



\begin{proof}

The first statement follows directly from Lemma \ref{octa} and Theorem \ref {expect_sub}; i.e.\ if $r$ is too small then the 
connected components are simply too small to support nontrivial homology.

For the second statement, we have from Theorem \ref{expect_Betti_Rips} that given this hypothesis on $r$ we have that $E
[\beta_k] \to \infty$. This by itself is not enough to establish that $\beta_k \neq 0$ a.a.s.\  However it is established in Section 4 of \cite{Meckes} 
that $\var[ \beta_k]$ is of the same order of magnitude as $E[\beta_k]$, so this follows from Chebyshev's inequality, as in \cite
{Alon}, Chapter 4.

\end{proof}

The corresponding result for \v{C}ech complexes is the following.

\begin{theorem} [Threshold for non-vanishing of $H_k$ in the random \v{C}ech complex] \label{lower-thresh-C}

\

\begin{enumerate}

\item If $$r = o\left( n^{-\frac{k+2}{d(k+1) }}\right) ,$$ then a.a.s.\ $H_k(VR(n;r))=0$, and 

\item if  $$r = \omega \left( n^{-\frac{k+2}{d(k+1) }}\right) ,$$ then a.a.s.\ $ H_k(VR(n;r)) \neq 0$.

\end{enumerate} \end{theorem}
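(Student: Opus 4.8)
The plan is to mirror the structure of the Vietoris-Rips threshold (Theorem \ref{lower-thresh-VR}), adapting the combinatorial input to the \v{C}ech setting. The key observation is that the threshold exponent $\frac{k+2}{d(k+1)}$ is precisely the value of $r$ (as a power of $n$) at which $E[\widetilde{S_k}] = \Theta(n^{k+2} r^{(k+1)d})$ transitions between tending to $0$ and tending to $\infty$. Indeed, setting $r = n^{-\alpha}$, the quantity $n^{k+2} r^{(k+1)d} = n^{k+2 - \alpha d(k+1)}$ vanishes exactly when $\alpha > \frac{k+2}{d(k+1)}$ and diverges exactly when $\alpha < \frac{k+2}{d(k+1)}$. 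So the threshold is dictated by the expected count of vertex-minimal homology-supporting configurations, just as in the Vietoris-Rips case it was dictated by $E[\widetilde{o}_k]$.

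For the first statement, I would argue that if $r = o(n^{-(k+2)/(d(k+1))})$ then a.a.s. there are no subcomplexes on $k+2$ vertices supporting nontrivial $H_k$. The analogue of Lemma \ref{octa} for \v{C}ech complexes is the elementary fact that any nontrivial class in $H_k$ of a \v{C}ech complex must be supported on at least $k+2$ vertices, with equality forcing the boundary of a $(k+1)$-simplex (this is stated in the excerpt immediately before Theorem \ref{expect_Betti-Cech}). Thus $\beta_k \le \widetilde{S_k} + f_k^{\ge k+3}$. Under the stated hypothesis on $r$, the result $E[\widetilde{S_k}] = \Theta(n^{k+2} r^{(k+1)d})$ from Section 3 of \cite{Meckes} gives $E[\widetilde{S_k}] \to 0$, and the bound $E[f_k^{\ge k+3}] = O(n^{k+3} r^{(k+2)d})$ (which is even smaller in this regime since $nr^d \to 0$) gives $E[\beta_k] \to 0$; then $\beta_k = 0$ a.a.s. by Markov's inequality applied to the integer-valued random variable $\beta_k$.

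For the second statement, the strategy is identical to the Vietoris-Rips case: under $r = \omega(n^{-(k+2)/(d(k+1))})$ we have $E[\widetilde{S_k}] \to \infty$ by the same asymptotic from \cite{Meckes}, and since $E[\widetilde{S_k}] \le E[\beta_k]$, the expectation diverges. To upgrade divergence of the mean to $\beta_k \neq 0$ a.a.s., I would invoke a second-moment / variance bound: it is established in \cite{Meckes} that $\var[\beta_k]$ (or, sufficiently, $\var[\widetilde{S_k}]$) is of the same order as $E[\beta_k]$, whence Chebyshev's inequality gives $\prob(\beta_k = 0) \le \var[\beta_k]/E[\beta_k]^2 \to 0$, exactly as in \cite{Alon}, Chapter 4.

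The main obstacle I anticipate is the variance estimate in the second part, and more precisely the fact that membership of a $(k+1)$-simplex's boundary in the \v{C}ech complex is \emph{not} determined by the underlying geometric graph when $k > 2$, because it depends on higher-order ball intersections $\bigcap B(x_i, r/2) \neq \emptyset$. This is the genuine point of departure from the flag-complex argument and is why the paper cites the hypergraph analogue of Penrose's subgraph count (Theorem \ref{expect_sub}) from \cite{Meckes} rather than Penrose directly. I would therefore lean on the companion paper for both the first-moment asymptotic $E[\widetilde{S_k}] = \Theta(n^{k+2} r^{(k+1)d})$ and the matching variance bound, treating the \v{C}ech-specific intersection combinatorics as encapsulated in those cited results; the topological and probabilistic scaffolding around them transfers verbatim from the Vietoris-Rips proof.
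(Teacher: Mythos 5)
Your proposal is correct and follows essentially the same route as the paper: a first-moment argument (via the minimal $(k+2)$-vertex support, the asymptotics $E[\widetilde{S_k}] = \Theta(n^{k+2}r^{(k+1)d})$, and Markov) for the vanishing regime, and a second-moment argument (divergence of $E[\beta_k]$ plus the variance bound from Section 3 of \cite{Meckes} and Chebyshev, as in \cite{Alon}) for non-vanishing. You also correctly identify the one genuine \v{C}ech-specific subtlety --- that faces are not determined by the underlying graph for $k>2$, which is exactly why the paper outsources both moments to \cite{Meckes} rather than to Penrose's subgraph counts.
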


\begin{proof}
The proof is identical.  The needed result for bounding the variance of  $\var[ \beta_k]$  is established in Section 3 of \cite{Meckes}.
\end{proof}

\section{Critical} \label{sect:crit}

The situation in the critical regime (or thermodynamic limit) is more delicate to analyze. We are still able to compute the right order of magnitude for $E[\beta_k]$: it grows 
linearly for every $k$.

\begin{theorem} For either the random Vietoris-Rips and \v{C}ech complexes on a probability distribution on $\R^d$ with bounded measurable density function, if $r=\Theta(n^{-1/d})$ and $k \ge 1$ is fixed, then $E[\beta_k] = \Theta(n)$.
\end{theorem}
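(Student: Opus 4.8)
The plan is to sandwich $\beta_k$ between two quantities that are each $\Theta(n)$ in the thermodynamic limit. For the upper bound I would use the trivial inequality $\beta_k \le f_k$, where $f_k$ denotes the total number of $k$-dimensional faces, since the rank of $H_k$ never exceeds $\dim C_k$. For the Vietoris--Rips complex a $k$-face is exactly a $(k+1)$-clique, i.e.\ an induced copy of $K_{k+1}$, so Theorem \ref{expect_sub} gives $E[f_k] = E[G_n(K_{k+1})] \sim \mu_{K_{k+1}}\, n^{k+1} r^{dk}$. Substituting $r = \Theta(n^{-1/d})$, hence $nr^d = \Theta(1)$, this is $\Theta(n)$, so $E[\beta_k] = O(n)$. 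For the \v{C}ech complex the same bound holds with the clique count replaced by the corresponding count of $(k+1)$-fold ball intersections, which is the hypergraph analogue from \cite{Meckes} and again scales like $n^{k+1} r^{dk} = \Theta(n)$.

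For the lower bound I would exhibit $\Omega(n)$ independent homology classes coming from isolated minimal spheres, exactly the mechanism isolated in the sparse regime. Recall from the left-hand inequality in Equation \ref{octo-morse} that $\widetilde{o}_k \le \beta_k$: each connected component whose graph is the $1$-skeleton of $O_k$ is, as a flag complex, the boundary of the cross-polytope $O_k \cong S^k$, so it contributes an independent summand to $H_k$, and homology of a disjoint union splits as a direct sum. Thus $E[\beta_k] \ge E[\widetilde{o}_k]$. Now $O_k$ is a connected feasible graph on $2k+2$ vertices (feasibility for every $k$ already follows from the regular $2k$-gon realizations used above), so the expected number of components isomorphic to it satisfies $E[\widetilde{o}_k] = \Theta(n^{2k+2} r^{d(2k+1)})$, which under $r = \Theta(n^{-1/d})$ equals $\Theta(n)$. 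Combining the two bounds yields $E[\beta_k] = \Theta(n)$. For the \v{C}ech complex one replaces $O_k$ by the boundary of the $(k+1)$-simplex $\partial\Delta^{k+1} \cong S^k$ on $k+2$ vertices and $\widetilde{o}_k$ by $\widetilde{S_k}$; here one needs $1 \le k \le d-1$, since for $k \ge d$ the nerve theorem already forces $H_k = 0$, and the component count $E[\widetilde{S_k}] = \Theta(n^{k+2} r^{d(k+1)}) = \Theta(n)$ comes from \cite{Meckes}.

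The step I expect to be the main obstacle is the lower bound's component count in the thermodynamic limit. In the subcritical regime $nr^d \to 0$, so induced copies of $O_k$ are almost surely isolated components and Theorem \ref{expect_sub} applies verbatim; but when $r = \Theta(n^{-1/d})$ we have $nr^d = \Theta(1)$, and the probability that a given realized copy of $O_k$ has no further point within distance $r$ of its vertices is no longer asymptotically $1$ but only a constant in $(0,1)$. The work is therefore to show this isolation probability stays bounded away from $0$: the expected number of intruding points near a fixed configuration is $O(nr^d) = O(1)$, and a Poisson / second-moment estimate then bounds the no-intruder probability below by some $e^{-C} > 0$. This is precisely the content of Penrose's component-count analysis in the thermodynamic limit (\cite{Penrose}, Ch.\ 13), and invoking it supplies a positive leading constant, hence $E[\widetilde{o}_k] = \Omega(n)$. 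Note that because we only claim a statement about the expectation, no variance estimate is needed here.
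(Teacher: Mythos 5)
Your proposal is correct, and it has the same overall sandwich structure as the paper's proof, but the upper bound is obtained by a genuinely different (and simpler) route. The paper just reuses the subcritical decomposition $E[\widetilde{o}_k] \le E[\beta_k] \le E[\widetilde{o}_k] + E[f_k^{\ge 2k+3}]$ and notes that both right-hand terms are $O(n)$ once Penrose's counts are carried into the thermodynamic limit; you instead use the crude chain-level bound $\beta_k \le f_k$ and count all $k$-faces as induced copies of $K_{k+1}$ (with $f_k^{\mbox{\scriptsize \v{C}ech}} \le f_k^{\mbox{\scriptsize Rips}}$ handling the \v{C}ech case). This buys something concrete: Theorem \ref{expect_sub} for induced subgraph counts $G_n(H)$ requires only $r \to 0$, so your upper bound needs no extension of any result into the critical regime, whereas the paper's finer decomposition, whose whole point in the subcritical regime was that $f_k^{\ge 2k+3}$ is of lower order, yields no extra precision here since every term is $\Theta(n)$. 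Your lower bound coincides with the paper's (components isomorphic to the $1$-skeleton of $O_k$, resp.\ the boundary of the $(k+1)$-simplex), and you correctly isolate the one nontrivial issue that the paper compresses into the sentence ``Penrose's results for component counts extend into the thermodynamic limit'': at $nr^d = \Theta(1)$ the isolation probability of a realized configuration is only a constant in $(0,1)$, and one must show it is bounded away from $0$. Your argument for this is right, and in fact is even easier than you suggest: conditional on the configuration, the remaining points miss its $O(r^d)$-measure neighborhood with probability $(1 - O(r^d))^{n-2k-2} \ge e^{-C} > 0$ by independence alone, no Poisson approximation or second-moment input being needed for a statement about expectations. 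Two minor remarks: the relevant component-count asymptotics in the thermodynamic limit are in Chapter 3 of \cite{Penrose} (Chapter 13 concerns the connectivity regime); and your observation that the \v{C}ech case must be restricted to $1 \le k \le d-1$ (since the nerve theorem forces $H_k = 0$ for $k \ge d$) is a correct caveat that the paper's own statement of the theorem leaves implicit.
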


\begin{proof} The proof is the same as in the previous section.  For example, for the Vietoris-Rips complex we still have 
$$E[\widetilde{o}_k]  \le E[\beta_k]  \le E[\widetilde{o_k}]+E[f_k^{\ge 2k+3}] .$$  Penrose's results for component counts extend in to the thermodynamic limit, so in particular $E[\widetilde{o}_k] = \Theta(n)$ and  $E[f_k^{\ge 2k+3}]=O(n)$.  The desired result follows.
\end{proof}

The thermodynamic limit is of particular interest since this is the regime where percolation occurs for the random geometric graph 
\cite{Penrose}.  Bollob\'as recently exhibited an analogue of percolation on the $k$-cliques of the Erd\H{o}s-R\'enyi random graph 
\cite{Bollo_clique}. It would be interesting to know if analogues of his result occurs in the random geometric setting.

For example, define a graph with vertices for $k$-dimensional faces, with edges between a pair whenever they are both contained in 
the same $(k+1)$-dimensional face. Does there exist a constant $C_k >0$ such that whenever $$\lim_{n \to \infty} nr^d >  C_k$$ there is a.a.s.\ a unique $k$-dimensional ``giant component'' (suitably defined), and whenever $$\lim_{n \to \infty} nr^d <  C_k,$$ all the 
components are a.a.s.\ ``small''?

\section{Supercritical} \label{sect:sup}

For this section and the next we assume that the underlying distribution is uniform on a smoothly bounded convex body.  (Recall that a smoothly bounded
convex body is a compact, convex set, with nonempty interior.) This assumption is not 
only a matter of convenience -- it would seem that some assumption on density must be made to make topological statements 
in the denser regimes.

For example, the geometric random graph becomes connected once $r = \Omega((\log{n}/n)^{1/d} )$ for a 
uniform distribution on a convex body, but for a standard multivariate normal distribution $r$ must be much larger, $r = \Omega
( (\log{\log{n}} / \log{n} )^{1/2} )$, before the geometric random graph becomes connected \cite{Penrose}.

The supercritical regime is where $r = \omega (n^{-1/d})$. In this section 
we give an upper bound on the expectation of the Betti numbers for the random Vietoris-Rips complex in this regime.  This upper bound is sub-linear
so this shows that the Betti numbers are growing the fastest in the thermodynamic limit.

The main tool is discrete Morse theory -- see the Appendix for the basic terminology and the main theorem.
A much more complete (and very readable) introduction to discrete Morse theory can be found in \cite{Forman}.

\begin{theorem}\label{thm-Ripssup} Let $R(X_n;r)$ be a random Vietoris-Rips complex on $n$ points taken i.i.d. uniformly from a smoothly bounded 
convex body $K$ in $\R^d$. Suppose $r = \omega (n^{-1/d})$, and write $W=nr^d$.  Then $$E[\beta_k] 
= O(W^k e^{-c W} n)$$ for some constant $c > 0$, and in particular $E[\beta_k] = o(n)$.
\end{theorem}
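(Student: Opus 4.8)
The plan is to bound $\beta_k$ by the number of critical $k$-cells of a carefully chosen discrete Morse matching, and then to estimate the expected number of critical cells by a first-moment computation. Recall (see the Appendix, or \cite{Forman}) that any acyclic matching on $R(X_n;r)$ yields, via the weak Morse inequalities, the bound $\beta_k \le m_k$, where $m_k$ denotes the number of critical $k$-cells. Since this holds for every realization of $X_n$, linearity of expectation gives $E[\beta_k] \le E[m_k]$, so it suffices to construct a matching whose critical cells are geometrically rare and to control $E[m_k]$.

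The heart of the argument is to build an acyclic matching, defined locally from the point configuration, with the property that every critical $k$-cell $\sigma$ \emph{certifies a geometric defect}: an empty ball $B(y,\rho)$ with $\rho = \Theta(r)$ whose center $y$ lies within $O(r)$ of $\sigma$ and which meets no point of $X_n$. The guiding intuition is that in the dense regime each point is surrounded by neighbors coming from all directions, so locally the complex is collapsible and can be coned off; a cell can only survive as critical if it abuts a region that the point cloud fails to fill. A natural starting point is the iterated-coning (minimal-vertex) matching associated with a generic linear order $\prec$ on the points: processing vertices in order and coning off each star, one checks that a $k$-simplex $\sigma$ is critical only if no vertex $w \prec \min_\prec(\sigma)$ is adjacent to every vertex of $\sigma$. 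The work is to upgrade this purely combinatorial certificate into the robust statement that a ball of volume $\Omega(r^d)$ near $\sigma$ must be empty; this is where the convex geometry of $K$ enters, and I expect it to be the main obstacle.

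Granting such a matching, the first-moment estimate is routine. By symmetry and linearity of expectation,
\[
E[m_k] \;\le\; \binom{n}{k+1}\, \prob\big(\{x_1,\dots,x_{k+1}\}\text{ spans a simplex with an adjacent empty ball }B(y,\rho)\big).
\]
For the uniform distribution on $K$ with $\mu(K)=V$, fixing $x_1$ and requiring the other $k$ points to be neighbors of $x_1$ contributes a factor $\Theta(r^{dk})$; and since $K$ is a smoothly bounded convex body, any ball $B(y,\rho)$ with $\rho = \Theta(r)$ whose center lies within $O(r)$ of $K$ satisfies $\mu(B(y,\rho)\cap K) \ge c_1 r^d$, so that $\prob(B(y,\rho)\cap X_n = \emptyset) \le (1 - c_1 r^d/V)^{\,n-k-1} \le e^{-cW}$ for a suitable $c>0$. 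A union bound over a fixed net of $O(1)$ candidate defect directions costs only a constant factor, and collecting terms yields $E[m_k] = O\big(n^{k+1} r^{dk} e^{-cW}\big) = O\big(n\,W^{k} e^{-cW}\big)$. Since $r=\omega(n^{-1/d})$ forces $W = nr^d \to \infty$, we have $W^k e^{-cW}\to 0$ and hence $E[\beta_k] = o(n)$.

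The main obstacle is the geometric lemma underlying the matching: showing that criticality forces an empty ball of volume $\Omega(r^d)$, rather than merely an empty thin ``lens'' (the region $\bigcap_{u\in\sigma} B(u,r)$ can pinch to negligible volume when $\sigma$ is spread across its full diameter, so the naive minimal-vertex certificate is not by itself strong enough). Making the matching simultaneously acyclic and geometrically faithful --- so that every surviving critical cell genuinely points to an $\Omega(r^d)$ hole in the coverage --- is the technical crux, and is exactly where discrete Morse theory must be married to the geometry of $K$. Secondary care is needed near $\partial K$, where balls may protrude; the smoothness and convexity of $K$ guarantee that a constant fraction of each such ball still lies inside $K$, so the bound $e^{-cW}$ is preserved up to the constant $c$.
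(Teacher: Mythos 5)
Your high-level plan coincides with the paper's: order the points, use the minimal-vertex (coning) matching to get a discrete gradient vector field, bound $\beta_k$ by the number of critical $k$-cells, and run a first-moment computation $\binom{n}{k+1}\,O(r^{dk})\,e^{-cW} = O(W^k e^{-cW} n)$. That skeleton, and your arithmetic at the end, are correct. But the proposal is not a proof, because the step you yourself flag as ``the main obstacle'' and ``the technical crux'' --- constructing a matching for which criticality certifiably forces an empty region of volume $\Omega(r^d)$ --- is exactly the content of the theorem, and you leave it unproven. As you correctly observe, the certificate you do write down (no vertex $w \prec \min_\prec(\sigma)$ adjacent to all of $\sigma$, for a \emph{generic} order $\prec$) is insufficient: it only says the lens $\bigcap_{u \in \sigma} B(u,r) \cap \{\text{points preceding } \min \sigma\}$ is empty, and for a generic order this set carries no volume lower bound. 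So the first-moment estimate has no event of probability $e^{-cW}$ to apply to, and the argument stops there.

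The paper closes this gap with two ideas missing from your proposal. First, the order is not generic: points are indexed by distance to a fixed origin (normalized so $B(0,1) \subset K$), so that ``precedes $x_{i_1}$'' becomes the metric condition of lying in $B(0,\|x_{i_1}\|)$. Second, criticality is exploited on \emph{both} sides of the matching. If $F = \{x_{i_1}, \ldots, x_{i_{k+1}}\}$ (indices increasing) is critical, then (i) no common neighbor of all of $F$ precedes $x_{i_1}$ (failure to pair up), and (ii) $F$ is not paired down with its facet $G = F \setminus \{x_{i_1}\}$, which forces the existence of a common neighbor $x_{i_0}$ of $G$ with $i_0 < i_1$; by (i) this witness satisfies $\|x_{i_0} - x_{i_1}\| > r$ while $\|x_{i_0} - x_{i_j}\| \le r$ for $j \ge 2$. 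This witness is precisely what defeats the pinched-lens scenario: writing $y_0 = x_{i_0}$, $y_1 = x_{i_1}, \ldots$, the law of cosines applied to the triangles $y_0 y_1 y_j$ (with $\|y_0 - y_j\| \le r$, $\|y_1 - y_j\| \le r$, $\|y_0 - y_1\| > r$) shows that the midpoint $y_m$ of $y_0 y_1$ satisfies $\|y_m - y_j\| < (\sqrt{3}/2) r$ for all $j \ge 2$, so the ball $B(y_m, (1-\sqrt{3}/2) r)$ lies in $\bigcap_{j} B(y_j, r)$; since also $\|y_m\| \le \|y_1\|$, its intersection with $B(0, \|y_1\|)$ retains volume at least $\epsilon_d r^d$ (this is Lemma \ref{Mgl}, in scaled form Lemma \ref{scaled}). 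Any sample point landing in that region would be a common neighbor of all of $F$ preceding $x_{i_1}$, contradicting (i); hence criticality forces a region of volume $\ge \epsilon_d r^d$ to be empty, which is the event of probability at most $(1 - \epsilon_d r^d/\mu(K))^{n-k-2} \le e^{-cW}$ that your computation needs. Without the choice of order by distance to the origin and without the downward-pairing witness $y_0$, no such volume bound is available, so the crux of the theorem remains unestablished in your write-up.
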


Here $c$ depends on the convex body $K$ but not on $k$. In fact it is apparent from the proof that $c$ depends only on the volume of $K$ and not on its shape.

Recall that $\mu(S)$ denotes the Lebesgue measure of $S \subset \R^d$, and $\| x \|$ denotes the Euclidean norm of $x \in \R^d$. 
We require a geometric lemma in order to prove the main theorem.

\begin{lemma}[Main geometric lemma] \label{Mgl} There exists a constant $\epsilon_d > 0$ such that the following holds. Let $l \ge 1$ and
$\{ y_0, \ldots, y_l \} \subset \R^d$ be an $(l+1)$-tuple of points such that
$$ \| y_0 \|  \le \| y_1 \| \le \ldots \le \|y_l \|,$$
and  $\| y_1\| \ge 1/2 $.  If $\| y_0 -  y_1 \| > 1$ and $\| y_i -  y_j \| \le 1$ for every other $0 \le i < j \le l$, then the intersection
 $$I = \bigcap_{i=1}^l B(y_i,1) \cap B(0,\| y_1 || )$$ satisfies $\mu(I) \ge \epsilon_d$.
 \end{lemma}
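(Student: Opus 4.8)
The plan is to prove the stronger statement that $I$ contains a ball of radius $\delta_d$, where $\delta_d>0$ depends only on $d$; the bound $\mu(I)\ge\epsilon_d$ then follows with $\epsilon_d$ equal to $\delta_d^{\,d}$ times the volume of the unit $d$-ball. Write $R=\|y_1\|\ge 1/2$, set $u=y_1/R$, and record the configuration through the vectors $e_i=y_i-y_1$. Then $e_1=0$, every $e_i$ with $i\ge 1$ satisfies $\|e_i\|=\|y_i-y_1\|\le 1$ (the pairs $(1,i)$ are not the exceptional pair $(0,1)$), while $e_0=y_0-y_1$ has $\|e_0\|>1$. Observe that $y_1$ lies in the closure of $I$: it is in $B(y_i,1)$ for every $i$ (as $\|e_i\|\le 1$) and on the boundary sphere of $B(0,R)$. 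The whole problem is thus to descend from $y_1$ a fixed distance into the interior of $I$ along a single well-chosen direction, and then to thicken.

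Three elementary consequences of the hypotheses drive the choice of direction. First, from $\|y_0\|\le\|y_1\|=R$ one gets $2\langle y_1,e_0\rangle+\|e_0\|^2\le 0$, hence $\langle u,e_0\rangle<0$: moving from $y_1$ toward $y_0$ enters $B(0,R)$. Second, for $i\ge 2$ we have $\|y_0-y_i\|\le 1<\|y_0-y_1\|=\|e_0\|$, so expanding $\|e_0-e_i\|^2\le 1$ and using $\|e_0\|>1$ yields the key inequality
\[
\langle e_0,e_i\rangle>\tfrac12\|e_i\|^2\qquad(i\ge 2),
\]
which says that $e_0$ points ``into'' each of the other balls $B(y_i,1)$. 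Third, the triangle inequality through $y_2$ gives $\|e_0\|\le 2$. I would then descend along
\[
w=-u+\lambda e_0,
\]
for a fixed constant $\lambda$ (for instance $\lambda=4$), taking $p^{*}=y_1+t_0\,\widehat w$ with $\widehat w=w/\|w\|$ and $t_0$ a small constant.

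The virtue of this choice is that the two pieces of $w$ handle the two kinds of constraint separately. The $-u$ term has $\langle u,w\rangle<-1$, so together with $R\ge 1/2$ it pushes $p^{*}$ a fixed distance inside $B(0,R)$, uniformly in $R$; crucially the penetration depth $R-\|p^{*}\|$ stays bounded below even as $R\to\infty$, since $\langle y_1,\widehat w\rangle=R\langle u,\widehat w\rangle$ scales like $R$. The $\lambda e_0$ term, via the key inequality, forces $\langle w,e_i\rangle>\|e_i\|(\lambda\|e_i\|/2-1)$, which is positive with a uniform margin once $\|e_i\|$ is bounded away from $0$; the remaining ``slack'' balls with small $\|e_i\|$ are handled by the crude estimate $\|p^{*}-y_i\|\le t_0+\|e_i\|$. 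Choosing $\lambda$ and $t_0$ so that all of these margins are simultaneously positive gives $\|p^{*}-y_i\|\le 1-\delta_d$ for every $i\ge 1$ and $\|p^{*}\|\le R-\delta_d$, i.e.\ $B(p^{*},\delta_d)\subseteq I$. The degenerate case $l=1$ (no balls beyond $B(y_1,1)$) is immediate: $p^{*}=y_1-\tfrac12 u$ already gives $B(p^{*},1/2)\subseteq I$.

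I expect the main obstacle to be precisely the uniformity of the margins in $R=\|y_1\|$, and it is here that the hypotheses must all be used. On one side, for large $R$ the sphere $\partial B(0,R)$ flattens, so a direction with only an $O(1/R)$ radial component (such as $e_0$ alone) fails to reach a fixed depth, which is why a genuine $-u$ term is needed. On the other side, moving along $-u$ alone would leave those balls $B(y_i,1)$ whose center $y_i$ sits ``above'' $y_1$ in the $u$ direction, and it is exactly the inequality $\langle e_0,e_i\rangle>\tfrac12\|e_i\|^2$ — the quantitative form of ``$y_0$ is within distance $1$ of every other $y_i$ but more than $1$ from $y_1$'' — that forbids a cluster point aligned with $u$ and lets the $\lambda e_0$ correction compensate. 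Checking that a single pair $(\lambda,t_0)$ makes the tight-ball margin, the slack-ball margin, and the large-ball penetration all positive for every $l$ and every admissible configuration is the one genuinely computational step.
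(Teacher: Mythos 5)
Your proposal is correct, and it takes a genuinely different route from the paper's. Both arguments start from the same algebraic consequence of the hypotheses, namely expanding the constraints $\|y_0-y_i\|\le 1<\|y_0-y_1\|$ into inner products (your key inequality $\langle e_0,e_i\rangle>\tfrac12\|e_i\|^2$; in the paper, the law-of-cosines bound $(y_0-y_2)\cdot(y_1-y_2)<\tfrac12$), but the executions diverge from there. The paper takes as its center the midpoint $y_m=(y_0+y_1)/2$, shows $\|y_m-y_i\|<\sqrt{3}/2$ for $i\ge2$, so that $B(y_m,\rho)$ with $\rho=1-\sqrt{3}/2$ lies inside those balls, and then handles the constraint $B(0,\|y_1\|)$ with no containment argument at all: since $\|y_m\|\le\|y_1\|$, the lens $B(y_m,\rho)\cap B(0,\|y_1\|)$ has volume at least that of the extremal lens with center on the sphere and $\|y_1\|=1/2$, and that extremal volume is $\epsilon_d$. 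You instead perturb $y_1$ inward along the mixed direction $-u+\lambda e_0$ and insist on fitting a whole round ball inside $I$, which is exactly why you must prove the uniform-in-$R$ penetration estimate that the paper's volume-monotonicity trick sidesteps. What each buys: the paper's proof is shorter, since the spherical constraint costs nothing; your proof gives a formally stronger conclusion (a ball of radius bounded below independently of $d$, $l$ and $R$ inside $I$) and is more careful exactly where the paper is loose --- your crude bound for small $\|e_i\|$ and your separate $l=1$ case take care of the ball $B(y_1,1)$ explicitly, whereas the paper's containment $B(y_m,\rho)\subset B(y_1,1)$ requires $\|y_0-y_1\|\le\sqrt{3}$, which the hypotheses do not supply when $\|y_0-y_1\|$ is close to $2$, and which is unavailable when $l=1$. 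The one step you leave implicit, the simultaneous choice of $(\lambda,t_0)$, does close: for instance $\lambda=4$ with the tight/slack split at $\|e_i\|\ge 3/4$ works, and any $t_0\le 1/(2(2\lambda+1))$ makes the penetration bound $\|p^*\|^2\le R^2-2t_0R/(2\lambda+1)+t_0^2$ (using $R\ge 1/2$) yield a constant depth, so all margins are positive simultaneously.
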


As the notation suggests, $\epsilon_d$ depends on $d$ but holds simultaneously for all $l$.

\begin{proof} [Proof of Lemma \ref{Mgl}] Let $y_m = (y_0 + y_1) /2$ denote the midpoint of line segment $y_0y_1$. By assumption 
that $\| y_0 - y_1 \| > 1$, we have $\| y_m -y_0 \| = \| y_m - y_1 \| > 1/2$. We now wish to check that $y_m$ is still not too far away 
from any $y_i$ with $2 \le i \le l$.

Let $\theta$ be the positive angle between $y_0 -y_2$ and $y_1 - y_2$. Since $\| y_0 - y_2 \| \le 1$, $\| y_1 - y_2 \| \le 1$, and $\| y_1 
- y_2 \| > 1$, the law of cosines gives that 
\begin{align*}
(y_0 - y_2) \cdot (y_1 - y_2)  &  = \| y_0 - y_2 \| \| y_1 - y_2 \| \cos{\theta} \\
& = \frac{1}{2} (\| y_0 -y_2 \|^2 + \|y_1 - y_2\|^2 -\|y_0 - y_2\|^2)  \\
&< \frac{1}{2}
\end{align*}
Then
\begin{align*}
\| y_m - y_2 \| ^2 & =  (y_m -y_2) \cdot (y_m - y_2)\\
& = ((y_0 +y_1) / 2 -y_2) \cdot ((y_0 +y_1) / 2 -y_2)\\
& = ((y_0 - y_2) / 2 + (y_1 - y_2) / 2 ) \cdot ((y_0 - y_2) / 2 + (y_1 - y_2) / 2 ) \\
& = (1/4) ( \| y_0 - y_2 \| ^2 + \| y_1 - y_2 \| ^2  + 2 (y_0 - y_2) \cdot (y_1 - y_2) )\\
& < (1/4) (1 + 1 + 2(1/2))\\
& = 3/4,\\
\end{align*}
so  $$ \| y_m - y_2 \| < \sqrt{3} / 2.$$ The same argument works as written with $y_2$ replaced by $y_i$ with $3 \le i \le l$. Now set $
\rho = 1 - \sqrt{3} / 2 $. By the triangle inequality $B( y_m , \rho) \subset B( y_i , 1)$ for $1 \le i \le l$. 
So we have that $$B(y_m , \rho) \cap B(0 , \| y_1\|) \subset \bigcap_{i=1}^l B(y_i ,1) \cap B(0, \| y_1 \| ).$$
By the triangle inequality we have that $\| y_m \| \le \| y_1 \|$, and it follows that
$$\mu \left( B(y_m , \rho) \cap B(0, \| y_1\|) \right) \ge \mu \left( B(y_1, \rho) \cap B (0 , \| y_1 \|) \right).$$
Since $\| y_1 \| \ge 1/2$, the quantity $\mu \left(B(y_1 , \rho) \cap B (0 , \| y_1 \|) \right)$ is bounded away from zero, and in fact it attains its 
minimum when $\| y_1 \| = 1/2$. Set $\epsilon_d$ equal to this minimum value of $\mu (B(y_1 , \rho) \cap B (0 , \| y_1 \|) )$, and the 
statement of the lemma follows.\\
\end{proof}

Scaling everything in $\R^d$ by a linear factor of $r$ we rewrite the lemma in the form in which we will use it.

\begin{lemma} \label{scaled} [Scaled geometric lemma] There exists a constant $\epsilon_d > 0$ such that the following holds for every $r >0$. Let 
$l \ge 1$ and $\{ y_0, \ldots, y_k \} \subset \R^d$ be an $(l+1)$-tuple of points, , such that $$ \| y_0 \|  \le \| y_1 \| \le \ldots \le \|y_l \|$$ and 
$(1/2) r \le \| y_1 \|$. If $\| y_0 -  y_1 \| > r$ and $\| y_i -  y_j \| \le r$ for every other $0 \le i < j \le l$, then the intersection $$I = \bigcap_
{i=1}^l B(y_i,r) \cap B(0,\| y_1 \| )$$ satisfies $\mu(I) \ge \epsilon_d r^d$.
 \end{lemma}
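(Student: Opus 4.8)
The plan is to deduce this directly from Lemma \ref{Mgl} by a change of scale, since the two statements are identical except that all lengths have been multiplied by $r$. Concretely, I would introduce the rescaled points $z_i = y_i / r$ for $0 \le i \le l$ and verify that the tuple $\{z_0, \ldots, z_l\}$ satisfies the hypotheses of Lemma \ref{Mgl}. Because scaling by the positive factor $1/r$ preserves the ordering of norms and multiplies every pairwise distance by $1/r$, the chain $\|y_0\| \le \cdots \le \|y_l\|$ becomes $\|z_0\| \le \cdots \le \|z_l\|$, the hypothesis $(1/2)r \le \|y_1\|$ becomes $\|z_1\| \ge 1/2$, the separation $\|y_0 - y_1\| > r$ becomes $\|z_0 - z_1\| > 1$, and the remaining bounds $\|y_i - y_j\| \le r$ become $\|z_i - z_j\| \le 1$. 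Thus Lemma \ref{Mgl} applies to the $z_i$ and yields $\mu(I') \ge \epsilon_d$, where $I' = \bigcap_{i=1}^l B(z_i, 1) \cap B(0, \|z_1\|)$ with the very constant $\epsilon_d$ produced there.

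Next I would relate $I'$ to the set $I$ appearing in the statement. Writing $\phi(x) = x/r$ for the dilation, one checks that $\phi$ carries each ball $B(y_i, r)$ onto $B(z_i, 1)$ and the ball $B(0, \|y_1\|)$ onto $B(0, \|z_1\|)$, since $\|x - y_i\| \le r$ is equivalent to $\|\phi(x) - z_i\| \le 1$. Because $\phi$ is a bijection of $\R^d$ it commutes with intersection, so $\phi(I) = I'$, equivalently $I = r \cdot I'$. Finally, a linear scaling by $r$ in $\R^d$ multiplies $d$-dimensional Lebesgue measure by $r^d$, so $\mu(I) = r^d \mu(I') \ge \epsilon_d r^d$, which is exactly the claimed bound. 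The constant $\epsilon_d$ is the same one from Lemma \ref{Mgl}, so in particular it is independent of both $r$ and $l$.

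There is essentially no obstacle here: the entire content is the observation that both the defining conditions and the target set are homogeneous of the correct degree under dilation. The only points requiring a moment's care are the bookkeeping that the intersection transforms cleanly as $\phi(I) = I'$ (which relies on $\phi$ being a bijection, so that no constraints are introduced or dropped by passing through $\phi$) and the standard fact that dilation by $r$ scales Lebesgue measure by $r^d$ rather than by $r$. Both are routine, and the lemma follows immediately from the already-established Lemma \ref{Mgl}.
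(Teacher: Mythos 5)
Your proof is correct and takes exactly the paper's approach: the paper obtains Lemma \ref{scaled} from Lemma \ref{Mgl} by precisely this dilation argument, stated there in a single line (``scaling everything in $\R^d$ by a linear factor of $r$''). Your write-up simply makes the routine bookkeeping explicit --- the transformed hypotheses, the identity $\phi(I)=I'$, and the factor $r^d$ in the measure --- all of which is accurate.
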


We are ready to prove the main result of the section.

\begin{proof} [Proof of Theorem \ref{thm-Ripssup}] By translation and rescaling if necessary, assume without loss of generality that $B(0,1) \subset K$. Since with probability $1$ no two points are the same distance to the origin, index the points $X_n = \{x_1, \ldots, x_n\}$  by distance to $0$, i.e.
$$ \| x_1 \| < \| x_2 \| < \dots < \| x_n \|.$$ 
Now we  define a discrete vector field $V$ on $R(X_n;r)$ in the sense of discrete Morse theory, as discussed in the Appendix.


Whenever possible pair face $S = \{ x_{i_1}, x_{i_2}, \dots , x_{i_j} \}$  with face $\{ x_{i_0}\} \cup S$
 with $i_0 < i_1$ and $i_0$ as small as possible.  This can be done in any particular order or simultaneously, and
still each face gets paired at most once, as follows.  A face $S$ can
not get paired with two different higher dimensional faces $\{x_{a} \} \cup S$ and $\{ x_{b} \} \cup S$,  since $S$ will prefer the vertex with smaller index $\min \{a,b \}$.  On the other hand, it is also not possible for $S$ to get paired with both a lower dimensional face and a higher dimensional face:
Suppose $S$ gets paired with $\{ x_a \} \cup S$.   Then $\| x_a \| < \| s \|$ for every $s \in S$, and no codimension $1$ face $F \prec S$ could also get paired with $S$, since $F$ would prefer to get paired with  $\{ x_a \} \cup F$.

Hence each face is in at most one pair and $V$  is a well defined discrete vector field. Moreover, the indices are decreasing along any $V$-path, so there are no closed $V$-paths.  Therefore $V$ is a discrete gradient vector field.


Let us bound the probability $p_k$ that a set of $k+1$ vertices span a $k$-dimensional face in the Vietoris-Rips complex.  Given the first vertex $v$, 
the other vertices would all have to fall in $B(v,r)$, so $p_k=O(r^{dk})$.  Recall that we defined $W = nr^d$ and we rewrite this bound as
$$p_k = O \left( (W/n)^k \right).$$

Given that a set of $k+1$ vertices $\{ x_{i_1}, x_{i_2}, \dots, x_{i_{k+1}} \}$ span a $k$-dimensional face $F$, how could $F$ be critical (or unpaired) with respect to $V$?  It must be that there is no common neighbor $x_a$ of these vertices with $a < i_1$ or else $F$ would be paired up by adding the smallest index such point.  On the other hand $F$ would be paired with $\{ x_{i_2}, \dots, x_{i_{k+1}} \}$, unless $x_{i_2}, \dots, x_{i_{k+1}}$ had a common neighbor with smaller index than $x_{i_1}$.  So assuming that $F$ is unpaired call this common neighbor $x_{i_0}$.

We have satisfied the hypothesis of Lemma \ref{scaled} with $l = k+1$ and $y_m = x_{i_m}$. (If $ \| y_1 \|  < (1/2) r$
then either $\| y_0 - y_1 \| < r$ or $\| y_o \| > \| y_1 \|$, a contradiction to our assumptions.)  So let
$$I = \bigcap_{j=1}^{k+1} B(x_{i_k},r) \cap B(0,\| x_{i_1} \| ),$$ and we know from the lemma 
that $\mu(I) \ge \epsilon_d r^d$ with $\epsilon_d > 0$
constant.

If any vertices fall in region $I$ then $F$ would be paired; indeed if $x_a \in I$ then $x_a$ would be a common neighbor of all the vertices in $F$, with $a < i_1$.

The probability that a uniform random point in $K$ falls in region $I$ is $\mu(I) / \mu (K) \ge \epsilon_d r^d / \mu(K)$, where 
$\mu(K)$ is the volume of the ambient convex body.
By independence of the random points, we have that the probability $p_c$ that $F$ is critical (given that it is a face) is at most
$$p_c \le \left( 1 - \frac{\epsilon_d}{ \mu(K)}r^d \right)^{n-k-2}.$$

Now
\begin{align*}
\left( 1 -  \frac{\epsilon_d}{ \mu(K)} r^d \right)^{n-k-2} & \le \exp(- \frac{\epsilon_d}{ \mu(K)}r^d(n-k-2) ) \\
& = O( \exp(-cW)), \\
\end{align*}
where $c$ is any constant such that $$0 < c <\frac{\epsilon_d}{ \mu(K)}.$$

Let $C_k$ denote the number of critical $k$-dimensional faces, and we have that
\begin{align*}
E[ C_k ] & \le { n \choose k+1}p_f p_c\\
& \le {n \choose k+1} \left( \frac{W}{n} \right)^{k} e^{-c W}\\
&= O(W^k e^{-cW}n).
\end{align*}

Since $\beta_k \le C_k$ in every case we have $E[\beta_k] \le E[C_k]$, and then
$$E[\beta_k] = O(W^k e^{-cW}n),$$
as desired.

\end{proof}

%

\section{Connected} \label{sect:con}

As in the previous section, we assume that the underlying distribution is uniform on a smoothly bounded convex body $K$,
but we now require $r$ to be slightly larger,
$r = \Omega((\log{n}/n)^{1/d} )$. In this regime, the geometric random graph is known to be connected \cite{Penrose}, and we 
show here that the \v{C}ech complex is contractible, and the Vietoris-Rips complex ``approximately contractible'' (in the sense of $k
$-connected for any fixed $k$).

\begin{theorem}[Threshold for contractibility, random \v{C}ech complex] \label{contractible} For a uniform distribution on a smoothly bounded convex body $K$ in $\R^d$, there exists a constant $c$, depending on $K$, such that if 
$r \ge c (\log{n}/n)^{1/d}$ then the random \v{C}ech complex $C(X_n;r)$ is a.a.s.\ contractible.
\end{theorem}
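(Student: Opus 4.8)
The plan is to reduce the statement to a coverage problem via the nerve theorem and then to show that coverage of $K$ by the balls forces the union of balls to be contractible. First I would observe that, by definition, $C(X_n;r)$ is exactly the nerve of the cover of $U := \bigcup_{i=1}^n B(x_i,r/2)$ by the closed balls $B(x_i,r/2)$. Since any finite intersection of balls is convex, it is either empty or contractible, so this is a good cover and the nerve theorem (Theorem 10.7 of \cite{Bjorner}, already invoked in the excerpt) gives a homotopy equivalence $C(X_n;r) \simeq U$. Thus it suffices to prove that $U$ is a.a.s.\ contractible.

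Next I would prove the deterministic geometric fact that if $X_n \subseteq K$ and $K \subseteq U$, then $U$ strong deformation retracts onto $K$, and is therefore contractible (as $K$ is convex). Let $\pi \colon \R^d \to K$ be the nearest-point projection onto the convex body $K$, which is continuous, and define the straight-line homotopy $F(x,t) = (1-t)x + t\,\pi(x)$. This fixes $K$ pointwise and ends in $K \subseteq U$, so the only thing to check is that $F(x,t) \in U$ for all $x \in U$. The key observation is that along the ray from $p:=\pi(x)$ through $x$ the set $U$ is ``downward closed'': writing $\nu = (x-p)/\|x-p\|$, the variational characterization of the projection gives $(x_i - p)\cdot \nu \le 0$ for every center $x_i \in K$, so for each $i$ the function $t \mapsto \|p + t\nu - x_i\|$ is nondecreasing for $t \ge 0$. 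Hence if $p + t_2\nu \in B(x_i,r/2)$ then $p+t_1\nu \in B(x_i,r/2)$ for all $0 \le t_1 \le t_2$; since $x$ itself lies in some such ball, the whole segment $[p,x]$ lies in $U$. This is the part I expect to be the main obstacle, since it is precisely where one must rule out ``holes'' of $U$ lying just outside $K$, and it is the only step that uses convexity of $K$ in an essential topological way.

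Finally I would establish the probabilistic coverage statement: for $c$ large enough, a.a.s.\ $K \subseteq U$. Set $\rho = r/2$ and fix a maximal $\rho/2$-separated net $N \subseteq K$; a volume comparison gives $|N| = O(\rho^{-d}) = O(n/\log n)$, and every point of $K$ lies within $\rho/2$ of $N$. After normalizing so that $B(0,1)\subseteq K$ as in the previous section, convexity furnishes at every $q \in K$ a cone of solid angle bounded below (the cone from $q$ over $B(0,1)$), which yields a uniform lower bound $\mu\big(B(q,\rho/2)\cap K\big) \ge \beta \rho^d$ for all $q \in N$, with $\beta>0$ depending only on $d$ and $K$. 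The probability that a fixed net ball contains no sample point is then at most $\exp(-\beta\rho^d n/\mu(K)) \le n^{-\beta (c/2)^d/\mu(K)}$, and a union bound over the $O(n)$ net points gives failure probability $O\big(n^{\,1-\beta(c/2)^d/\mu(K)}\big)$. Choosing $c$ so that $(c/2)^d > \mu(K)/\beta$ makes this tend to $0$; on the complementary event every $y \in K$ is within $\rho/2$ of a net point and hence within $\rho = r/2$ of a sample, so $K \subseteq U$. Combining the three steps proves the theorem. (I note that this coverage estimate could alternatively be cited from Penrose \cite{Penrose}, whose connectivity threshold sits at the same scale.)
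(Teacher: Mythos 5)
Your proposal is correct, and at the top level it follows the same route as the paper: reduce contractibility of $C(X_n;r)$ to the statement that the balls $B(x_i,r/2)$ a.a.s.\ cover $K$, via the Nerve Theorem (Theorem \ref{nerve}), and then prove coverage by a union bound over roughly $n/\log n$ pieces of diameter comparable to $r$, which is exactly where the threshold scale $(\log n / n)^{1/d}$ enters. Two genuine differences are worth recording. First, your middle step --- the strong deformation retraction of $U=\bigcup_i B(x_i,r/2)$ onto $K$ via the nearest-point projection $\pi$, using the variational inequality $(x_i-\pi(x))\cdot\nu\le 0$ to show that distances to all centers are nondecreasing along outward rays --- is an argument the paper's proof does not contain. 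The paper passes directly from ``the balls cover $K$'' to ``Theorem \ref{nerve} gives that it is contractible,'' but the Nerve Theorem identifies $C(X_n;r)$ with the union $U$, which in general is strictly larger than $K$; some argument such as yours (or, equivalently, the observation that a nonempty intersection of balls centered in $K$ must meet $K$, which follows from the same projection inequality) is needed to conclude that $U$, rather than just $K$, is contractible. So your proposal fills a real, if standard, gap in the paper's exposition. Second, your coverage argument uses a maximal $(r/4)$-separated net together with the lower bound $\mu\bigl(B(q,r/4)\cap K\bigr)\ge\beta (r/4)^d$ coming from the cone over $B(0,1)$ with apex $q$, whereas the paper tiles $\R^d$ by cubes of side $r/(4\sqrt{d})$, requires only the cubes wholly contained in $K$ to be occupied, and covers boundary cubes by adjacency. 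These are interchangeable: your net-plus-cone treatment handles the boundary more directly (the paper's claim that every boundary cube is adjacent to a cube contained in $K$ itself quietly uses convexity and small mesh), at the cost of invoking the standard volume bound for convex bodies; both yield failure probability $O\bigl(n^{1-\gamma c^d}\bigr)$ for a suitable constant $\gamma>0$ and hence the same conclusion.
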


This is best possible up to the constant in front, since there also exists a constant $c'$ such that if 
$r \le c' (\log{n}/n)^{1/d}$, then the random \v{C}ech complex is a.a.s.\  disconnected \cite{Penrose}.

\begin{definition} Let $\mathcal{A}= \{ A_1, A_2, \dots, A_k \}$  be a cover of a topological space $T$.  Then the
{\it nerve} of the cover $\mathcal{A}$, 
is the (abstract) simplicial complex $\nerve(\mathcal{A})$ on vertex set $[k] = \{ 1, 2, \dots, k \}$ with $\sigma \subset [k]$ a face
whenever $\bigcap_{i \in \sigma} A_i \neq \emptyset.$
\end{definition}

The proof depends on the following result (Theorem 10.7 in \cite{Bjorner}). 

\begin{theorem} [Nerve Theorem] \label{nerve} If $T$ is a triangulable topological space, and $\mathcal{A}= (A_i )_{i \in [k]}$ is a finite cover of $T$ by closed sets, such 
that every nonempty section $A_{i_1} \cap A_{i_2} \cap \dots \cap A_{i_t}$ is contractible, then $T$ and the nerve $\nerve(T)$ are homotopy
equivalent.
\end{theorem}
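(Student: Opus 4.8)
The plan is to prove the Nerve Theorem by induction on the number $k$ of sets in the cover, using the gluing lemma for homotopy equivalences of pushouts and exploiting triangulability to force every inclusion in sight to be a cofibration. After discarding any empty cover elements (which change neither $T$ nor the nerve), the base case $k=1$ has $A_1 = T$ contractible, since it is the nonempty section indexed by $\{1\}$, while $\nerve(\mathcal{A})$ is a single point; both are contractible, so the claim holds. It is convenient, and standard in this closed-cover setting, to arrange the $A_i$ as subcomplexes of a fixed triangulation of $T$, so that all unions and intersections appearing below are themselves triangulable and all the relevant inclusions are cofibrations.

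For the inductive step I would write $B = A_1 \cup \cdots \cup A_{k-1}$, so that $T = B \cup A_k$ and $B \cap A_k = \bigcup_{i<k}(A_i \cap A_k)$. Three subcovers feed the induction. First, the cover $\{A_i\}_{i<k}$ of $B$ has sections among the sections of $\mathcal{A}$, so it satisfies the hypothesis and gives $B \simeq |\nerve(\{A_i\}_{i<k})|$, which is exactly the induced subcomplex $N'$ of $\nerve(\mathcal{A})$ on vertex set $[k-1]$. Second, $A_k$ is contractible, being a section. Third, the cover $\{A_i \cap A_k\}_{i<k}$ of $B\cap A_k$ has sections $A_{i_1}\cap\cdots\cap A_{i_t}\cap A_k$, again contractible, so $B \cap A_k \simeq |\link(k)|$ where $\link(k) = \{\sigma \subseteq [k-1] : \sigma \cup \{k\} \in \nerve(\mathcal{A})\}$. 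On the combinatorial side $\nerve(\mathcal{A}) = N' \cup \Star(k)$, where the closed star $\Star(k)$ is the cone $k \ast \link(k)$, hence contractible, and $N' \cap \Star(k) = \link(k)$. Thus $T = B \cup_{B\cap A_k} A_k$ and $|\nerve(\mathcal{A})| = |N'| \cup_{|\link(k)|} |\Star(k)|$ are both pushouts of a contractible space along a cofibration whose corner terms correspond under the inductive equivalences, and the gluing lemma (see Section 10 of \cite{Bjorner}) yields $T \simeq |\nerve(\mathcal{A})|$.

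The main obstacle is \emph{compatibility}: the gluing lemma requires the inductive equivalence $B \simeq |N'|$ to restrict, up to a coherent homotopy, to the equivalence $B\cap A_k \simeq |\link(k)|$ along the two inclusions, i.e. the pushout squares must commute up to homotopy in a way the bare inductive hypothesis does not supply. The cleanest remedy is to strengthen the induction by building all equivalences functorially from the \emph{blow-up complex} $M = \{(x,p) \in T \times |\nerve(\mathcal{A})| : \supp(p) \subseteq \sigma(x)\}$, where $\sigma(x) = \{i : x \in A_i\}$. Here the projection $\pi_1\colon M \to T$ has closed-simplex fibers $|\Delta(\sigma(x))|$, and $\pi_2\colon M \to |\nerve(\mathcal{A})|$ has fibers $\bigcap_{i\in\tau}A_i$ over a point of carrier $\tau$; both kinds of fiber are contractible, $\pi_1$ by minimality of the simplex and $\pi_2$ by the hypothesis on sections. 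Routing the induction through $M$ makes every restriction square commute on the nose and reduces the homotopy-equivalence statements for $\pi_1$ and $\pi_2$ to the same projection lemma, applied over the simplexwise decomposition of $|\nerve(\mathcal{A})|$. Triangulability is precisely what guarantees that $|\link(k)| \hookrightarrow |N'|$ and $B \cap A_k \hookrightarrow B$ are cofibrations, so that the homotopy pushouts agree with the honest pushouts and the gluing lemma applies. Checking these cofibration and coherence conditions is where the genuine work lies; the remaining homotopy-theoretic input is standard.
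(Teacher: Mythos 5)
You should first know that the paper offers no proof of this statement at all: it is quoted as background, with a citation to Theorem 10.7 of Bj\"orner's Handbook chapter \cite{Bjorner}, so your argument can only be judged on its own terms. Much of your architecture is the standard modern one and is sound in outline: the identification of the nerves of the three subcovers with $N'$, $\Star(k)$, and $\link(k)$ is correct; you rightly observe that the naive induction founders on compatibility of the inductive equivalences; and routing everything through the blow-up complex $M$ (the homotopy colimit of the diagram of intersections, with its two projections) is exactly how one fixes that, with your fiber computations for $\pi_1$ and $\pi_2$ both correct.

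There are, however, two genuine gaps. First, the ``convenient and standard'' reduction arranging the $A_i$ as subcomplexes of a fixed triangulation of $T$ is false in general: triangulability of $T$ imposes nothing on the cover, whose members are arbitrary closed sets and may be wildly embedded. For example, an Alexander horned ball is a closed contractible subset of the triangulable space $S^3$ that is a subcomplex of no triangulation of $S^3$ (a tame contractible compactum there has simply connected complement, which the horned ball does not). Since your cofibration claims --- including the closing assertion that ``triangulability is precisely what guarantees'' that $B \cap A_k \hookrightarrow B$ and $|\link(k)| \hookrightarrow |N'|$ are cofibrations --- all rest on this reduction, the real difficulty of the closed-cover statement is assumed away rather than resolved. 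Second, contractibility of point fibers does not make a map a homotopy equivalence: the projection of the closed topologist's sine curve onto $[0,1]$ has only points and one segment as fibers, yet is no equivalence, since the domain is not even path-connected. For $\pi_2$ this is repairable with no extra hypotheses, because $M$ is a homotopy colimit and homotopy invariance of the homotopy colimit compares the diagram of contractible intersections with the constant point diagram, whose homotopy colimit is the barycentric subdivision of $\nerve(\mathcal{A})$. But for $\pi_1 \colon M \to T$ --- the comparison of the homotopy colimit with the honest union --- some goodness or cofibrancy of the closed cover is precisely what is required, and precisely what arbitrary closed sets lack; in the literature this direction is handled by ANR/NDR hypotheses, carrier arguments, or subcomplex covers. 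As written, your proof is essentially complete for a cover of a CW complex by subcomplexes, but for the theorem as stated the hard step has been relocated into an unjustified ``without loss of generality.''
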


\begin{proof} [Proof of Theorem \ref{contractible}]
Once $r$ is sufficiently large the balls $\{ B(x_i,r/2) \}$ cover the smoothly bounded convex body $K$, and then
Theorem \ref{nerve}  gives that it is contractible.  So to prove the claim it suffices to show that there exists a constant $ c>0$ such that whenever
$r \ge c (\log{n}/n)^{1/d}$, the balls of radius $r/2$ a.a.s.\ cover $K$.  There is no harm in assuming that $r \to 0$ as $n \to \infty$ since the statement is trivial otherwise.

Let $\Z^d$ denote the $d$-dimensional cubical lattice, and $\lambda \Z^d$ the same lattice linearly scaled in every direction
by a factor $\lambda >0$.  With the end in mind we set $\lambda = r /( 4 \sqrt{d})$.  (Note that since $r=r(n)$, $\lambda$ is also a function of $n$.) Since $K$ is bounded, only a finite number $N$ of the boxes
of side length $\lambda$ intersect it.  More precisely, it is easy to see that $$N = \mu(K) / \lambda^d+O(1/\lambda^{d-1}).$$

As $n \to \infty$ and $\lambda \to 0$ almost all of these $N$ boxes are contained in $K$, but some are on the boundary.  Denote by $S_K$ the set of boxes completely contained in $K$.  Suppose every box in $S_K$ contains at least one point in $X_n$.  Then the balls of radius $r/2$ cover $K$, as follows.

First of all, each box has diameter $\lambda \sqrt{d} = r /4$. So a ball of radius $r/2$ with a point in one of these boxes not only covers the box itself, but all the boxes adjacent to it.  Since every boundary box is adjacent to at least one box in $S_K$, this is sufficient.

For a box $B \in S_K$, let $p_o$ denote the probability that box $B \cap X_n = \emptyset$.  By uniformity of distribution this is the same for every $B$, and by independence of the points we have that 
\begin{align*}
p_o & = (1 - \lambda^d / \mu(K))^n \\
& \le \exp( -\lambda^d n / \mu(K) ) \\
& = \exp ( - (r / 4\sqrt{d}) ^ d n/ \mu(K) )\\
& = \exp ( - C r^d n),
\end{align*}
where $$C=\frac{1}{4^d d^{d/2} \mu(K)}$$
is constant.

Setting $r = c_k (\log{n}/n)^{1/d}$ we have that
\begin{align*}
p_o &\le \exp(-C c_k^d \log{n})\\
& = n^{-C c_k^d}.\\
\end{align*}
There are at most $N$ boxes in $S_K$ and
\begin{align*}
N &= \mu(K) / \lambda^d+O(1/\lambda^{d-1})\\
 &= (1+o(1)) / C r^d,
\end{align*}
so applying a union bound, the probability $p_f$ that at least one box in $S_K$ fails to
contain any points from $X_n$ is bounded by
\begin{align*}
p_f  & \le N p_o \\
 & \le \frac{1+o(1)}{ C r^d}  n^{-C c_k^d}\\
& = \frac{1+o(1)}{ Cc_k^d \log{n}}  n^{1-C c_k^d}.
\end{align*}
So choosing $c_k > (1 / C)^{1/d}$ is sufficient to ensure that $K$ is a.a.s.\ covered by the $n$ random balls of radius $r/2$, and the desired result follows.
\end{proof}

The situation for the Vietoris-Rips complex is a bit more subtle since the nerve theorem is not available to us.  Nevertheless, we 
use Morse theory to show 
in the connected regime that the Vietoris-Rips complex becomes ``approximately contractible,'' in the sense of highly connected.

\begin{definition} A topological space $T$ is $k$-connected if every map from an $i$-dimensional sphere $S^i \to T$ is 
homotopically trivial for $0 \le i \le k$.
\end{definition}
For example, $0$-connected means path-connected, and $1$-connected means path-connected and simply-connected. The Hurewicz Theorem and universal coefficients for homology gives that if $T$ is $k$-connected, then $\widetilde{H}_i(T) =0$ for $i \le k$, with coefficients in $\Z$ or any field \cite{Hatcher}.

\begin{theorem}  [$k$-connectivity of the random Vietoris-Rips complex]  \label{Rips_conv_conn}For a smoothly bounded convex body $K$ in $\R^d$, endowed with a uniform 
distribution, and fixed $k \ge 0$, if $r \ge c_k (\log{n}/n)^{1/d}$ then the random Vietoris-Rips complex $R(X_n;r)$ is a.a.s.\ $k
$-connected. (Here $c_k >0$ is a constant depending only on the volume $\mu(K)$ and $k$.)
\end{theorem}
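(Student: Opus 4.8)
The plan is to reuse verbatim the discrete gradient vector field $V$ constructed in the proof of Theorem \ref{thm-Ripssup}, together with its critical-cell estimate, and to reduce $k$-connectivity to two a.a.s.\ statements: that $R(X_n;r)$ is connected, and that $V$ has no critical cells in dimensions $1,2,\dots,k$. Recall from that proof that, writing $W=nr^d$, the expected number $E[C_j]$ of critical $j$-cells satisfies $E[C_j]=O(W^j e^{-cW} n)$ for any constant $c<\epsilon_d/\mu(K)$, where $\epsilon_d$ is the constant of Lemma \ref{scaled}; this holds throughout the regime $r=\omega(n^{-1/d})$, which contains the present one. (For $r$ bounded below by a constant the complex is a single simplex and the statement is trivial, so there is no harm in assuming $r\to 0$.)

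First I would establish the probabilistic estimates. In the connected regime $r\ge c_k(\log n/n)^{1/d}$ we have $W\ge c_k^d\log n$. The function $W\mapsto W^j e^{-cW}$ is decreasing for $W>j/c$, and since $W\ge c_k^d\log n\to\infty$ the bound is eventually decreasing in $W$; hence $E[C_j]=O\!\left((c_k^d\log n)^j\, n^{1-cc_k^d}\right)$ for each fixed $j$. Choosing $c_k$ large enough that $cc_k^d>1$ (and at least as large as Penrose's connectivity threshold constant \cite{Penrose}), every one of these expectations tends to $0$. By Markov's inequality and a union bound over the finitely many values $j=1,\dots,k$, a.a.s.\ $V$ has no critical cell of dimension between $1$ and $k$; and a.a.s.\ $G(X_n;r)$, hence its clique complex $R(X_n;r)$, is connected.

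It remains to convert this into $k$-connectivity. By the main theorem of discrete Morse theory (see the Appendix and \cite{Forman}), $R(X_n;r)$ is homotopy equivalent to a CW complex $M$ with exactly one cell of dimension $j$ for each critical $j$-cell of $V$. On the a.a.s.\ event above, $M$ has cells only in dimension $0$ and in dimensions $\ge k+1$, and $M$ is connected. For $k\ge 1$ the attaching map of any cell of dimension $m\ge k+1$ is a map of the connected sphere $S^{m-1}$ into the $k$-skeleton $M^{(k)}$, which is a discrete set of $0$-cells; such a map is constant, so no higher cell joins distinct $0$-cells and the number of components of $M$ equals its number of $0$-cells. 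Connectedness therefore forces a single $0$-cell, so $M^{(k)}$ is a point. Since $M$ is obtained from $M^{(k)}$ by attaching cells of dimension $\ge k+1$, the inclusion $M^{(k)}\hookrightarrow M$ induces isomorphisms on $\pi_i$ for $i<k$ and a surjection on $\pi_k$; as $\pi_i(M^{(k)})=0$ for all $i$, we conclude $\pi_i(M)=0$ for $i\le k$, i.e.\ $M$, and hence $R(X_n;r)$, is $k$-connected. The case $k=0$ is exactly the a.a.s.\ connectivity of $G(X_n;r)$.

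The main obstacle is the homotopy-theoretic bookkeeping in the last step rather than the estimates: one must notice that there is no need to control the number of critical $0$-cells directly, since the absence of critical cells in dimensions $1,\dots,k$ together with a.a.s.\ connectivity of the underlying graph already pins the Morse complex down to a single vertex with all remaining cells in dimension $\ge k+1$. Verifying that such a complex is genuinely $k$-connected --- that the homotopy groups $\pi_i$, and not merely the reduced homology groups $\Homology_i$, vanish for $i\le k$ --- is where the cellular (skeletal) approximation theorem is used, and is the only place in the argument where we reason with $\pi_i$ rather than with homology.
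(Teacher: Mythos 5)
Your proof is correct, and its skeleton is the same as the paper's: reuse the discrete gradient vector field and the critical-cell bound $E[C_j]=O(W^je^{-cW}n)$ from Theorem \ref{thm-Ripssup}, force these expectations to zero for $j=1,\dots,k$ by taking $c c_k^d>1$, and convert ``one $0$-cell and nothing else below dimension $k+1$'' into $k$-connectivity via Theorem \ref{morse} and cellular approximation. Where you genuinely diverge is the treatment of dimension $0$. The paper asserts that a.a.s.\ the \emph{only} critical cell of dimension $\le k$ is the vertex nearest the origin, i.e.\ it runs the probabilistic argument for vertices as well; note that this is really a separate (if easy) check that the paper leaves implicit, since for a critical vertex there is no pairing-down witness $x_{i_0}$ and Lemma \ref{scaled} as stated requires $l\ge 1$ --- one instead bounds the volume of the lens $B(x_i,r)\cap B(0,\|x_i\|)$ directly. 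You avoid estimating critical $0$-cells altogether: you import a.a.s.\ connectivity of $G(X_n;r)$ from Penrose \cite{Penrose} and observe that a connected CW complex with no $1$-cells has exactly one $0$-cell. This buys a cleaner probabilistic step (no dimension-$0$ edge case, and you also handle $r\ge c_k(\log n/n)^{1/d}$ rather than just equality, via monotonicity of $W^je^{-cW}$, which the paper glosses over), at the cost of an external ingredient; to preserve the claim that $c_k$ depends only on $\mu(K)$ and $k$, you should note that a connectivity constant depending only on $\mu(K)$ and $d$ is available, e.g.\ from the box-covering argument in the proof of Theorem \ref{contractible}. Conversely, the paper's route yields connectivity of $R(X_n;r)$ as a byproduct rather than assuming it.

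Two small repairs, neither fatal. First, the attaching map of an $m$-cell lands in the $(m-1)$-skeleton $M^{(m-1)}$, not in $M^{(k)}$; the correct version of your step is that attaching cells of dimension $\ge 2$ never changes $\pi_0$ (induct on skeleta, or note that with no $1$-cells the cellular chain complex makes $H_0(M)$ free on the set of $0$-cells), so components biject with $0$-cells and connectivity forces a single one. Second, your parenthetical that $r$ bounded below by a constant makes the complex a single simplex is false unless $r\ge\diam(K)$; but the remark is unnecessary, since the critical-cell bound requires no assumption that $r\to 0$ and tends to zero even faster when $W=\Theta(n)$.
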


\begin{proof} [Proof of Theorem \ref{Rips_conv_conn}] 

The proof is identical to the proof of Theorem \ref{thm-Ripssup}, but now $r$ is bigger and we obtain a stronger result.  
We place a discrete gradient vector field on $R(X_n;r)$ in the same way described before, and repeat the same argument.
If $C_k$ denotes the number of critical $k$-dimensional faces, $c$ is the constant in the statement of Theorem \ref{thm-Ripssup},
and $W=nr^d$, then we have
\begin{align*}
E[ C_k ] &= O\left( W^k e^{-cW}n \right)\\
& = O\left( (nr^d)^k e^{-cnr^d} n \right)\\
& = O\left( (c_k^d \log{n} )^k  n^{1-c c_k^d} \right),
\end{align*}
since $n r^d   = c_k^d \log{n}$.  So 
we have that $E[ C_k] \to 0$ provided that $c_k > (1/c)^{1/d}$.

The same argument holds simultaneously for all smaller values of $k \ge 1$ as well, so a.a.s.\ the only critical cell of dimension $\le k$ is the vertex closest to the origin. By Theorem \ref{morse} in the Appendix, $R(X_n;r)$ is a.a.s.\ homotopy equivalent to a CW-complex with one $0$-cell and no other cells of dimension $\le k$.  This implies that $R(X_n;r)$ is $k$-connected by cellular approximation \cite{Hatcher}.
\end{proof}

At the moment we do not know if there is a sufficiently large constant $t >0 $ such that whenever $ r \ge t (\log{n} /n) ^{1/d}$, the random Vietoris-Rips complex $R(X_n; r)$ is a.a.s.\ contractible.  In fact it is not even clear that making $r = \omega \left(  (\log{n} /n) ^{1/d} \right)$ is sufficient for this; this ensures that $R(X_n ;r)$ is a.a.s.\ $k$-connected for every fixed $k$, but our results here do not rule out the possibility that there is nontrivial homology in dimension $k$ where $k \to \infty$ as $n \to \infty$. 

\subsection{Non-vanishing to vanishing threshold}

Given a lemma about geometric random graphs which we state without proof, we have a second threshold where $k$th homology 
passes back from non-vanishing.

First the statement of the lemma. (We are still assuming that the underlying distribution is uniform on a smoothly bounded convex 
body.)

\begin{lemma}  Suppose $H$ is a feasible subgraph, that $r =\Omega (n^{-1/d})$, and that $r = o \left(( \log n /n )^{1/d}\right)$.  
Then the geometric random graph $X(n;r)$ a.a.s.\ has at least one connected component isomorphic to $H$.
\end{lemma}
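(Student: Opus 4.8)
The plan is to apply the second moment method to the number of components of $X(n;r)$ isomorphic to $H$. Write $m = |V(H)|$ and let $J_n(H)$ be this component count, as defined above. It suffices to prove that $E[J_n(H)] \to \infty$ and that $\var[J_n(H)] = o\!\left(E[J_n(H)]^2\right)$; Chebyshev's inequality then gives
$$\prob\!\left(J_n(H) = 0\right) \le \frac{\var[J_n(H)]}{E[J_n(H)]^2} \to 0,$$
which is exactly the assertion that a.a.s.\ a component isomorphic to $H$ exists. (Alternatively one could run a Chen--Stein Poisson approximation, obtaining $\prob(J_n(H)=0) = e^{-E[J_n(H)](1+o(1))} \to 0$; the local-dependence estimates required are the same as those below.)

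For the first moment, decompose $J_n(H) = \sum_S \xi_S$ over $m$-element subsets $S$ of $X_n$, where $\xi_S$ indicates that $S$ induces a copy of $H$ and is isolated, i.e.\ that no point of $X_n \setminus S$ lies in $N_r(S) := \bigcup_{x \in S} B(x,r)$. Since $H$ is connected, a realizing cluster has diameter $O(r)$, so $\mu(N_r(S)\cap K) \le \mu(N_r(S)) = O(r^d)$ and, conditioning on the positions of $S$, the isolation probability $\left(1 - \mu(N_r(S)\cap K)/\mu(K)\right)^{n-m}$ is at least $e^{-O(nr^d)} = e^{-O(W)}$. Because $r = o((\log n/n)^{1/d})$ we have $W = nr^d = o(\log n)$, so this isolation factor is $n^{-o(1)}$ uniformly in $S$. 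Feasibility of $H$ makes $\mu_H > 0$ (it supplies a positive-measure set of $m$-point configurations realizing $H$ as an induced subgraph, generic ones persisting under perturbation), so by Theorem \ref{expect_sub} the expected number of induced copies is $\Theta(n^m r^{d(m-1)}) = \Theta(n\,W^{m-1}) = \Omega(n)$ (the case $m=1$ of isolated vertices being elementary). Therefore $E[J_n(H)] \ge n^{-o(1)}\,\Omega(n) = n^{1-o(1)} \to \infty$.

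For the second moment I would expand $\var[J_n(H)] = \sum_{S,T}\left(E[\xi_S\xi_T] - E[\xi_S]E[\xi_T]\right)$ over ordered pairs of $m$-subsets and split into three cases. The diagonal $S=T$ contributes at most $E[J_n(H)]$, which is $o\!\left(E[J_n(H)]^2\right)$ since $E[J_n(H)] \to \infty$. If $S \neq T$ overlap, they cannot both be connected components, so $\xi_S\xi_T = 0$ and the corresponding terms are nonpositive. For disjoint pairs whose neighborhoods $N_r(S), N_r(T)$ are themselves disjoint, $\xi_S$ and $\xi_T$ are asymptotically independent: the events ``$S$ induces $H$'' and ``$T$ induces $H$'' depend on disjoint point sets, and the only coupling is that the other $n-2m$ points must avoid $N_r(S) \cup N_r(T)$ rather than each neighborhood separately, a discrepancy of relative size $O(r^d) = o(1)$. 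Hence these pairs contribute $o\!\left(E[J_n(H)]^2\right)$ as well.

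The main obstacle is the remaining case: disjoint pairs $S,T$ that are non-adjacent --- so both can be components --- yet close enough that $N_r(S)$ and $N_r(T)$ overlap, where $E[\xi_S\xi_T]$ is no longer well approximated by the product. The point to exploit is that the combined exclusion region still has volume $\Theta(r^d)$, so the joint isolation costs only a single factor $e^{-\Theta(W)}$; summing over the $O(W^m)$ expected placements of a second isolated cluster within distance $O(r)$ of a fixed one bounds the total contribution of close pairs by $O\!\left(W^m\, E[J_n(H)]\right)$. Since $W = o(\log n)$ gives $W^m = n^{o(1)}$ while $E[J_n(H)] = n^{1-o(1)}$, this is $o\!\left(E[J_n(H)]^2\right)$. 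This is the delicate step --- it is the $b_1, b_2$ local-dependence bound in Penrose's analysis of component counts (\cite{Penrose}, Ch.~3), made trickier here by the fact that $W=nr^d$ is permitted to grow and so the isolation probability shrinks. Granting it, every case contributes $o\!\left(E[J_n(H)]^2\right)$, Chebyshev applies, and the lemma follows.
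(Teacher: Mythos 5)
There is nothing in the paper to compare your argument against: the paper states this lemma explicitly \emph{without proof}, remarking only that it ``should follow from the techniques in Chapter 3 of \cite{Penrose}.'' Your proposal is a correct implementation of exactly those techniques---first and second moments of the component count $J_n(H)$, then Chebyshev---so it fills a gap the paper leaves open rather than deviating from it. Two features of your write-up are worth highlighting as genuinely necessary. First, you were right not to quote Theorem \ref{expect_sub} for $J_n(H)$ itself: the asymptotic $E[J_n(H)]\sim \mu_H n^m r^{d(m-1)}$, with the \emph{same} constant as for induced copies, is valid only when $nr^d \to 0$, whereas here $W = nr^d$ may grow; citing the theorem only for induced copies and inserting the isolation factor $e^{-O(W)} = n^{-o(1)}$ by hand, via $W = o(\log n)$, is precisely where the hypothesis $r = o\left((\log n / n)^{1/d}\right)$ enters, and is the step that would fail in the connectivity regime. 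Second, your ``delicate'' close-pair case is in fact sound as sketched and can be closed without appealing to Penrose's local-dependence machinery: drop the isolation requirement on $T$ and retain it for $S$ only with respect to the $n-2m$ points outside $S \cup T$; conditionally on the positions of $S$, the expected number of close copies $T$ is $O(W^m)$ uniformly, which yields your bound $O\left(W^m E[J_n(H)]\right)$, and this is $o\left(E[J_n(H)]^2\right)$ because $W^m = n^{o(1)}$ while $E[J_n(H)] = n^{1-o(1)}$. The one soft spot is the parenthetical claim that feasibility (mere realizability as an induced geometric graph) forces $\mu_H > 0$: a single realization could a priori require non-strict distance equalities and fail to persist under perturbation. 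This is really an imprecision in the paper's informal definition rather than in your proof---Penrose's definition of feasibility is exactly that realizing configurations have positive measure, i.e.\ $\mu_H > 0$---and with that reading your argument is complete.
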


This lemma should follow from the techniques in Chapter 3 of \cite{Penrose}.  Given the lemma, we have the following intervals of vanishing and non-vanishing homology for $VR(n; r)$.

\begin{theorem}[Intervals of vanishing and non-vanishing, random Vietoris-Rips complex] Fix $k \ge 1$.  For a random Vietoris-
Rips complex on a uniform distribution on a smoothly bounded convex body in $\R^d$,
\begin{enumerate}
\item if $$r = o \left(n^{-\frac{2k+2}{d(2k+1) }}\right) \mbox{ or } r = \omega \left( ( \log n /n )^{1/d} \right)$$ then a.a.s.\ 
$H_k =0$, and
\item if $$r = \omega \left(n^{-\frac{2k+2}{d(2k+1) }} \right) \mbox{ and } r = o \left( ( \log n /n )^{1/d} \right)$$ then a.a.s.\ 
$H_k \neq 0$.
\end{enumerate}
\end{theorem}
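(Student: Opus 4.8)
The plan is to assemble the theorem from results already established, invoking the stated component-existence lemma only to bridge the denser portion of the non-vanishing interval that the sparse-regime machinery cannot reach. Both parts are a.a.s.\ statements, so it suffices to check that each hypothesis places $r$ in a regime already analyzed.

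For the vanishing statement (1) I would split on which hypothesis holds. If $r = o\!\left(n^{-\frac{2k+2}{d(2k+1)}}\right)$, then part (1) of Theorem \ref{lower-thresh-VR} applies verbatim --- the components are too small to support $H_k$ --- and a.a.s.\ $H_k = 0$. If instead $r = \omega\!\left((\log n/n)^{1/d}\right)$, then for every fixed constant $c_k$ we eventually have $r \ge c_k (\log n/n)^{1/d}$, so Theorem \ref{Rips_conv_conn} gives that $R(X_n;r)$ is a.a.s.\ $k$-connected; since $k$-connectivity forces $\widetilde{H}_k = 0$, again a.a.s.\ $H_k = 0$.

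For the non-vanishing statement (2), the hypothesis $r = \omega\!\left(n^{-\frac{2k+2}{d(2k+1)}}\right)$ together with $r = o\!\left((\log n/n)^{1/d}\right)$ straddles the subcritical, critical, and supercritical regimes, so I would break at $r \sim n^{-1/d}$ (passing to subsequences if $r n^{1/d}$ oscillates). On the part where $r = o(n^{-1/d})$, part (2) of Theorem \ref{lower-thresh-VR} applies directly and gives a.a.s.\ $H_k \neq 0$. On the part where $r = \Omega(n^{-1/d})$ (and still $r = o((\log n/n)^{1/d})$) I would invoke the component-existence lemma with $H$ the $1$-skeleton of the cross-polytope $O_k$. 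This $H$ is connected and feasible: the regular even-gon realizations recorded in the proof of Theorem \ref{expect_Betti_Rips} exhibit it as an induced subgraph in the plane, hence in $\R^d$ for $d \ge 2$. The lemma then produces, a.a.s., a connected component of the geometric random graph isomorphic to $H$; since the Vietoris-Rips complex is the clique complex of that graph, the corresponding component of $R(X_n;r)$ is exactly $O_k$, the boundary of the $(k+1)$-dimensional cross-polytope, a simplicial $k$-sphere. It therefore contributes a nonzero summand to $H_k$, so a.a.s.\ $H_k \neq 0$, and the two subcases together cover the stated interval.

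The main obstacle is exactly the denser half of (2): in the critical and supercritical range the first-moment and variance estimates behind Theorem \ref{lower-thresh-VR} break down, and the upper bound of Theorem \ref{thm-Ripssup} gives no lower bound on $\beta_k$. The only viable route is to realize a single isolated $k$-sphere as a component, which is precisely the content of the quoted lemma; its proof --- deferred to Penrose's component-counting techniques --- is the real work, since one must show that in this regime such a configuration genuinely occurs as a separate component and is not a.a.s.\ absorbed into the giant component. A secondary point to verify carefully is the identification of the clique complex of the $1$-skeleton of $O_k$ with $O_k \cong S^k$: a vertex set is a clique precisely when it omits every antipodal pair, so the maximal cliques are the facets of the cross-polytope boundary and the homotopy type is indeed that of $S^k$.
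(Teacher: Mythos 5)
Your proposal is correct and takes essentially the same route as the paper's (very terse) proof: part (1) combines the sparse-regime vanishing result with $k$-connectivity in the connected regime, and part (2) combines the sparse-regime non-vanishing result with the component-existence lemma applied to the $1$-skeleton of the cross-polytope $O_k$, whose clique complex is $O_k \cong S^k$. Your write-up in fact makes explicit several points the paper leaves implicit --- the case split at $r \sim n^{-1/d}$ with the subsequence argument, the feasibility of the cross-polytope skeleton, and the identification of its clique complex with a $k$-sphere.
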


Similarly for $C(n,r)$ , we have the following.

\begin{theorem}[Intervals of vanishing and non-vanishing, random \v{Cech} complex] Fix $k \ge 1$.  For a random \v{C}ech 
complex on a uniform distribution on a smoothly bounded convex body,
\begin{enumerate}
\item if $$r = o \left(n^{-\frac{k+2}{d(k+1) }}\right) \mbox{ or } r = \omega \left( ( \log n /n )^{1/d} \right)$$ then a.a.s.\ $H_k 
=0$, and
\item if $$r = \omega \left(n^{-\frac{k+2}{d(k+1) }} \right) \mbox{ and } r = o \left( ( \log n /n )^{1/d} \right)$$ then a.a.s.\ 
$H_k \neq 0$.
\end{enumerate}
\end{theorem}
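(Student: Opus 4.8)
The plan is to assemble the four regimes already established in the article and to supply, for the \v{C}ech complex, the one geometric ingredient special to it. Throughout I take $1 \le k \le d-1$; for $k \ge d$ the \v{C}ech complex has $H_k = 0$ for all $r$ by the nerve theorem, so only part~(1) is relevant and it holds trivially. Part~(1) splits into two cases. When $r = o\!\left(n^{-\frac{k+2}{d(k+1)}}\right)$, observe that $\frac{k+2}{d(k+1)} > \frac1d$, so this range lies strictly inside the sparse regime $r = o(n^{-1/d})$, and Theorem~\ref{lower-thresh-C}(1) gives $H_k = 0$ a.a.s.\ directly. When $r = \omega\!\left((\log n/n)^{1/d}\right)$, we have $r \ge c(\log n/n)^{1/d}$ for the constant $c$ of Theorem~\ref{contractible} once $n$ is large, so the \v{C}ech complex is a.a.s.\ contractible and again $H_k = 0$.

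For part~(2) I would split the range $\omega\!\left(n^{-\frac{k+2}{d(k+1)}}\right) \cap o\!\left((\log n/n)^{1/d}\right)$ at $n^{-1/d}$, arguing separately along the values of $n$ for which $r_n < n^{-1/d}$ and those for which $r_n \ge n^{-1/d}$; since each part yields $\prob(H_k \ne 0) \to 1$, so does the whole sequence. On the sparse part, where $r = o(n^{-1/d})$, Theorem~\ref{lower-thresh-C}(2) already gives $H_k \neq 0$ a.a.s.\ (via the variance bound and Chebyshev cited there). It remains to treat $r = \Omega(n^{-1/d})$ together with $r = o\!\left((\log n/n)^{1/d}\right)$, which is precisely the hypothesis of the preceding lemma. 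Here the strategy is to exhibit an isolated connected component whose \v{C}ech complex is a $k$-sphere, and then invoke the lemma to guarantee that such a component a.a.s.\ occurs.

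The geometric ingredient is a rigid configuration realizing $\partial\Delta^{k+1}$, the boundary of a $(k+1)$-simplex, as a \v{C}ech complex on $k+2$ vertices. I would place the $k+2$ points at the vertices of a regular $(k+1)$-simplex, which embeds in $\R^{k+1} \subseteq \R^d$ since $k \le d-1$, with edge length $\ell$. The smallest enclosing ball of any $k+1$ of the vertices has radius $\ell\sqrt{\tfrac{k}{2(k+1)}}$, while that of all $k+2$ has radius $\ell\sqrt{\tfrac{k+1}{2(k+2)}}$. Choosing $\ell$ in the open interval
\[
r\sqrt{\frac{k+2}{2(k+1)}} < \ell < r\sqrt{\frac{k+1}{2k}}
\]
forces every $(k+1)$-fold intersection of radius-$(r/2)$ balls to have nonempty interior while the full $(k+2)$-fold intersection is empty, and keeps all pairwise distances below $r$; the interval is nonempty and contained in $(0,r)$ exactly because $k \ge 1$ (equivalently $k(k+2) < (k+1)^2$). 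Thus an isolated component on such a configuration has \v{C}ech complex exactly $\partial\Delta^{k+1} \simeq S^k$, contributing to $H_k$.

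The hard part is bridging the lemma, which is phrased in terms of the graph-isomorphism type of a component, to the \v{C}ech complex, whose faces on $k+2$ vertices depend on higher-order ball intersections and so are \emph{not} determined by the underlying geometric graph: a component with complete $1$-skeleton could a priori realize the full simplex rather than its boundary. To close this gap I would use the lemma in a refined form where ``isomorphic to $H$'' is replaced by ``having vertex set in the open configuration class realizing $\partial\Delta^{k+1}$ above.'' Since the conditions defining that class are strict inequalities, the class is open and has positive measure, and the Penrose component-count techniques underlying the lemma, already applied in the sparse regime in \cite{Meckes}, should yield that the expected number of such isolated components tends to infinity with enough concentration that at least one occurs a.a.s. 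By contrast, for the Vietoris-Rips theorem no such subtlety arises: taking $H$ to be the $1$-skeleton of $O_k$ (feasible by the regular $2k$-gon realization of Figure~\ref{fig:even}), the clique complex of any component isomorphic to $H$ equals $O_k \simeq S^k$, so the lemma as literally stated suffices.
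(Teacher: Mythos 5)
Your proposal is correct, and at the top level it follows the same route as the paper: part (1) by combining the sparse-regime vanishing result (Theorem~\ref{lower-thresh-C}(1)) with contractibility in the connected regime (Theorem~\ref{contractible}), and part (2) by combining sparse-regime non-vanishing (Theorem~\ref{lower-thresh-C}(2)) with the unproven component Lemma in the intermediate regime $r = \Omega(n^{-1/d})$, $r = o\left((\log n/n)^{1/d}\right)$. What is worth recording is that your write-up is substantially more careful than the paper's own two-sentence proof, which simply asserts that the Lemma produces a component ``homeomorphic to the sphere $S^k$.'' As you observe, for $k \ge 2$ this does not follow from the Lemma as stated: \v{C}ech faces on $k+2$ vertices depend on higher-order ball intersections, so a component whose geometric graph is $K_{k+2}$ could carry the full simplex, its boundary, or any intermediate complex, and the same ambiguity afflicts any candidate graph $H$. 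Your two additions repair exactly this lacuna: the calibrated regular $(k+1)$-simplex with edge length in $\left( r\sqrt{\tfrac{k+2}{2(k+1)}},\ r\sqrt{\tfrac{k+1}{2k}} \right)$, whose \v{C}ech complex is exactly $\partial\Delta^{k+1} \cong S^k$ (your circumradius computation is right, and the interval is nonempty precisely when $k(k+2) < (k+1)^2$), and the strengthening of the Lemma from graph-isomorphism classes to open configuration classes, which is needed because only the latter pins down the \v{C}ech complex. The remaining caveat applies equally to you and to the paper: the component Lemma is stated without proof (``should follow from the techniques in Chapter 3 of \cite{Penrose}''), and your configuration-class version is a strict strengthening of it, so your argument is conditional in the same way the paper's is, just honestly so. Your subsequence splitting at $n^{-1/d}$ and your reading of the theorem as implicitly restricted to $1 \le k \le d-1$ (part (2) is false for $k \ge d$, since the nerve theorem kills $H_k$ identically) are both correct and are points the paper leaves unsaid.
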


\begin{proof}  In both cases, (1) follows from the results we have established in the sparse regime.  The point of the Lemma is that as 
long as $r$ falls in this intermediate regime, there is a.a.s.\ at least one connected component homeomorphic to the sphere $S^k$, 
hence contributing to homology $H_k$.
\end{proof}

\section{Further directions} \label{sect:further}

%

From the point of view of applications to topological data analysis, the thing that is most needed is results for statistical persistent homology \cite
{Carlsson}. Bubenik and Kim computed persistent homology for i.i.d.\ uniform random points in the interval \cite{Bubenik} applying the 
theory of order statistics, but so far these are some of the only detailed results for persistent homology of randomly sampled points.
(More recently Bubenik, Carlsson, Kim, and Luo discussed recovering persistent homology of a manifold with respect to some fixed function by data smoothing with kernels, and then applying stability for persistent homology \cite{Bubenik2}.)

The theorems in this article have implications for statistical persistent homology. In 
particular, we have bounded the number of nontrivial homology classes, and since almost all
of the homology comes from vertex minimal spheres, almost all classes should not persist for long.
What one might like is to rule out homology classes that persist for a long time altogether.
Such a theorem would be an important step toward quantifying the statistical significance of persistent homology.

All the results here are stated for Euclidean space, but we believe this is mostly a  matter of convenience.  Analogous results for homology should hold for $d$-dimensional compact Riemannian manifolds.  The manifold will contribute its own homology in the supercritical regime, but for most functions $r=r(n)$ this will be overwhelmed by noise, since $E[\beta_k] \to \infty$ and the homology of the manifold itself will be finite dimensional.  In contrast, one would expect persistent homology to detect the homology of the manifold itself.

Although we have bounded Betti numbers here, coefficients have not come into play.  It seems more refined tools are needed to detect the torsion in $\Z$-homology of random complexes. (This comes up for other kinds of random simplicial complexes as well; see for example \cite{BHK}.)

Finally, we comment that the topological properties studied here are not monotone, the results 
suggest strongly that they are roughly unimodal. But can this be made more precise? For example, can one show that for sufficiently 
large $n$, $E[\beta_k]$ is actually a monotone function of $r$? Similar statistically unimodal behavior in random homology has been 
previously observed in \cite{Kahle_neighborhood} and \cite{Kahle_clique}.

\section*{Acknowledgements}

I gratefully acknowledge Gunnar Carlsson and Persi Diaconis for their mentorship and support, and for suggesting this line 
of inquiry.

I would especially like to thank an anonymous referee for a careful reading of an earlier version of this article and for several 
suggestions which significantly improved it.  I also thank Yuliy Barishnikov, Peter Bubenik, Mathew Penrose, and Shmuel Weinberger for helpful conversations, and Afra Zomorodian for computing and plotting homology of a random geometric complex.

This work was supported in part by Stanford's NSF-RTG grant. Some of this work was completed at the 
workshop in Computational Topology at Oberwolfach in July 2008.

\section*{Appendix: discrete Morse theory}

In this section we briefly introduce terminology of discrete Morse theory and state the main theorem.  For a more complete 
introduction to the subject we refer the reader to \cite{Forman}.
 
For two faces $\sigma, \tau$ of a simplicial complex, we write $\sigma \prec \tau$ if $\sigma$ is a face of $\tau$ of codimension 1.

\begin{definition} A discrete vector field $V$ of a simplicial complex $\Delta$ is a collection of pairs of faces of $\Delta$
$\{ \alpha \prec \beta \}$ such that each face is in at most one pair.
\end{definition}

Given a discrete vector field $V$, a {\it closed $V$-path} is a sequence of faces

$$\alpha_0 \prec \beta_0 \succ \alpha_1 \prec \beta_1 \succ \ldots \prec \beta_{n} \succ \alpha_{n+1}, $$
with $\alpha_{i+1} \neq \alpha_{i}$ such that $\{ \alpha_i \prec \beta_i \} \in V$ for $i=0, \ldots, n$ and $\alpha_{n+1}=\alpha_o$. (Note that $\{ \beta_i \succ 
\alpha_{i+1} \} \notin V$ since each face is in at most one pair.) We say that $V$ is a {\it discrete gradient vector field} if there are no 
closed $V$-paths.

Call any simplex not in any pair in $V$ {\it critical}. Then the main theorem is the following \cite{Forman}.

\begin{theorem}[Fundamental theorem of discrete Morse theory] \label{morse} Suppose $\Delta$ is a simplicial complex with a 
discrete gradient vector field $V$. Then $\Delta$ is homotopy equivalent to a CW complex with one cell of dimension $k$ for each 
critical $k$-dimensional simplex.
\end{theorem}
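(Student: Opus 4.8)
The plan is to reduce the statement to two standard local facts by sweeping $\Delta$ with a filtration adapted to $V$. First I would upgrade the purely combinatorial data of the gradient vector field to a real-valued discrete Morse function $f$ on $\Delta$: an assignment of a real number to every face so that each face $\sigma$ has at most one codimension-one coface with $f$-value at most $f(\sigma)$ and at most one codimension-one face with $f$-value at least $f(\sigma)$, arranged so that the exceptional (coface, face) pairs are precisely the pairs of $V$ and the critical faces are precisely those left unpaired. The existence of such an $f$ is exactly where the hypothesis that $V$ has no closed $V$-paths is used: the $V$-paths generate a relation on the faces of $\Delta$, acyclicity says this relation has no directed cycles, and so it can be refined to a total order; assigning values consistent with a linear extension (and making the two faces of each pair adjacent) yields $f$. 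I expect this to be the main obstacle, since one must verify that a single global numbering can simultaneously satisfy all the local matching constraints, and it is precisely the absence of closed paths that removes the potential obstruction.

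Given $f$, I would consider the level subcomplexes $\Delta(c)$, consisting of every face that is contained in some face of $f$-value at most $c$, and let $c$ increase from below the minimum value to above the maximum, so that $\Delta(c)$ grows from $\emptyset$ to all of $\Delta$. Because there are finitely many faces it suffices to understand how $\Delta(c)$ changes each time $c$ crosses the value of a newly included face. If the entering faces are the members of a matched pair $\{\alpha \prec \beta\} \in V$, then $\alpha$ enters together with its coface $\beta$, and the key point (again guaranteed by acyclicity, via the ordering chosen above) is that at this moment $\beta$ is the only coface of $\alpha$ present, so $\alpha$ is a free face of $\beta$. Adding $\{\alpha,\beta\}$ is therefore an elementary collapse, and $\Delta(c)$ deformation retracts onto the previous level without changing its homotopy type and without contributing a cell.

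The complementary case is that the entering face $\sigma$ is critical, say of dimension $k$. Then every proper face of $\sigma$ has strictly smaller $f$-value and hence already lies in the previous level, so $\partial \sigma$ is contained in $\Delta(c^-)$; consequently $\Delta(c)$ is obtained from $\Delta(c^-)$ by attaching a single $k$-cell along its boundary. Sweeping through all values of $c$ in increasing order and composing these elementary moves, $\Delta$ is assembled from the empty complex by a sequence of deformation retractions and single $k$-cell attachments, with exactly one $k$-cell attached for each critical $k$-dimensional simplex. Retaining only the attaching data produces a CW complex with one $k$-cell per critical $k$-face that is homotopy equivalent to $\Delta$, which is the assertion. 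The remaining ingredients --- that an elementary collapse is a deformation retraction, and that attaching a cell along a map homotopic to one landing in a subcomplex does not change the homotopy type --- are standard, and I would simply cite them.
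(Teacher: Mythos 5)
This theorem is stated in the paper's appendix as background and is not proved there --- the paper simply cites Forman's user's guide \cite{Forman} for it. Your proposal is a correct outline of precisely the argument in that cited source: realize the acyclic matching as the gradient of a discrete Morse function (via a linear extension of the Hasse diagram with matched edges reversed, which is a directed acyclic graph exactly because there are no closed $V$-paths), then sweep the level subcomplexes, with matched pairs entering as elementary collapses and critical simplices entering as single cell attachments.
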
 

Simply counting cells is an extremely coarse measure of the topology a complex, but it can be enough to completely determine homotopy type; for example a CW complex with one $0$-cell and all the rest of its cells $d$-dimensional is a wedge of $d$-spheres.

In all cases, if $f_k$ is the number of cells of dimension $k$, then the definition of cellular homology gives that $\beta_k \le f_k$, and this is the main fact that we exploit in Sections \ref{sect:sup} and \ref{sect:con} to bound the expected dimension of homology.

\bibliographystyle{plain}

\begin{thebibliography}{10}

\bibitem{Alon}
Noga Alon and Joel~H. Spencer.
\newblock {\em The probabilistic method}.
\newblock Wiley-Interscience Series in Discrete Mathematics and Optimization.
  John Wiley \& Sons Inc., Hoboken, NJ, third edition, 2008.
\newblock With an appendix on the life and work of Paul Erd{\H{o}}s.

\bibitem{BHK}
Eric Babson, Christopher Hoffman, and Matthew Kahle.
\newblock The fundamental group of random 2-complexes.
\newblock {\em J. Amer. Math. Soc.}, 24(1):1--28, 2011.

\bibitem{Yuliy}
Yuliy Barishnokov.
\newblock Quantum foam, August 2009.
\newblock (Talk given at AIM Workshop on ``Topological complexity of random
  sets'').

\bibitem{Bjorner}
A.~Bj{\"o}rner.
\newblock Topological methods.
\newblock In {\em Handbook of combinatorics, {V}ol.\ \ 2}, pages 1819--1872.
  Elsevier, Amsterdam, 1995.

\bibitem{Bollo_clique}
B{\'e}la Bollob{\'a}s and Oliver Riordan.
\newblock Clique percolation.
\newblock {\em Random Structures Algorithms}, 35(3):294--322, 2009.

\bibitem{Bubenik2}
P.~Bubenik, G.~Carlson, P.T. Kim, and Z.M. Luo.
\newblock {Statistical topology via Morse theory, persistence and nonparametric
  estimation}.
\newblock {\em Algebraic Methods in Statistics and Probability II}, 516:75,
  2010.

\bibitem{Bubenik}
Peter Bubenik and Peter~T. Kim.
\newblock A statistical approach to persistent homology.
\newblock {\em Homology, Homotopy Appl.}, 9(2):337--362, 2007.

\bibitem{Carlsson}
Gunnar Carlsson.
\newblock Topology and data.
\newblock {\em Bull. Amer. Math. Soc. (N.S.)}, 46(2):255--308, 2009.

\bibitem{Rips_plane}
Erin~W. Chambers, Vin de~Silva, Jeff Erickson, and Robert Ghrist.
\newblock Vietoris-{R}ips complexes of planar point sets.
\newblock {\em Discrete Comput. Geom.}, 44(1):75--90, 2010.

\bibitem{Edelsbrunner}
David Cohen-Steiner, Herbert Edelsbrunner, and John Harer.
\newblock Stability of persistence diagrams.
\newblock {\em Discrete Comput. Geom.}, 37(1):103--120, 2007.

\bibitem{Persi_MSRI}
Persi Diaconis.
\newblock Application of topology.
\newblock Quicktime video, September 2006.
\newblock (Talk given at MSRI Workshop on ``Application of topology in science
  and engineering,'' Quicktime video available on MSRI webpage).

\bibitem{Edelsbrunner_survey}
Herbert Edelsbrunner and John Harer.
\newblock Persistent homology---a survey.
\newblock In {\em Surveys on discrete and computational geometry}, volume 453
  of {\em Contemp. Math.}, pages 257--282. Amer. Math. Soc., Providence, RI,
  2008.

\bibitem{Forman}
Robin Forman.
\newblock A user's guide to discrete {M}orse theory.
\newblock {\em S\'em. Lothar. Combin.}, 48:Art.\ B48c, 35 pp. (electronic),
  2002.

\bibitem{hyperbolic}
M.~Gromov.
\newblock Hyperbolic groups.
\newblock In {\em Essays in group theory}, volume~8 of {\em Math. Sci. Res.
  Inst. Publ.}, pages 75--263. Springer, New York, 1987.

\bibitem{gromov_random}
M.~Gromov.
\newblock Random walk in random groups.
\newblock {\em Geom. Funct. Anal.}, 13(1):73--146, 2003.

\bibitem{Hatcher}
Allen Hatcher.
\newblock {\em Algebraic topology}.
\newblock Cambridge University Press, Cambridge, 2002.

\bibitem{Hausmann}
Jean-Claude Hausmann.
\newblock On the {V}ietoris-{R}ips complexes and a cohomology theory for metric
  spaces.
\newblock In {\em Prospects in topology ({P}rinceton, {NJ}, 1994)}, volume 138
  of {\em Ann. of Math. Stud.}, pages 175--188. Princeton Univ. Press,
  Princeton, NJ, 1995.

\bibitem{Kahle_neighborhood}
Matthew Kahle.
\newblock The neighborhood complex of a random graph.
\newblock {\em J. Combin. Theory Ser. A}, 114(2):380--387, 2007.

\bibitem{Kahle_clique}
Matthew Kahle.
\newblock Topology of random clique complexes.
\newblock {\em Discrete Math.}, 309(6):1658--1671, 2009.

\bibitem{Meckes}
Matthew Kahle and Elizabeth Meckes.
\newblock Limit theorems for {B}etti numbers of random simplicial complexes.
\newblock submitted, arXiv:1009.4130, 2010.

\bibitem{Linial_Meshulam}
Nathan Linial and Roy Meshulam.
\newblock Homological connectivity of random 2-complexes.
\newblock {\em Combinatorica}, 26(4):475--487, 2006.

\bibitem{Linial_Novik}
Nathan Linial and Isabella Novik.
\newblock How neighborly can a centrally symmetric polytope be?
\newblock {\em Discrete Comput. Geom.}, 36(2):273--281, 2006.

\bibitem{Meshulam_Wallach}
R.~Meshulam and N.~Wallach.
\newblock Homological connectivity of random {$k$}-dimensional complexes.
\newblock {\em Random Structures Algorithms}, 34(3):408--417, 2009.

\bibitem{Smale}
Partha Niyogi, Stephen Smale, and Shmuel Weinberger.
\newblock Finding the homology of submanifolds with high confidence from random
  samples.
\newblock {\em Discrete Comput. Geom.}, 39(1-3):419--441, 2008.

\bibitem{NSW2}
Partha Niyogi, Stephen Smale, and Shmuel Weinberger.
\newblock A topological view of unsupervised learning from noisy data.
\newblock to appear, 2010.

\bibitem{Penrose}
Mathew Penrose.
\newblock {\em Random geometric graphs}, volume~5 of {\em Oxford Studies in
  Probability}.
\newblock Oxford University Press, Oxford, 2003.

\bibitem{Pippenger}
Nicholas Pippenger and Kristin Schleich.
\newblock Topological characteristics of random triangulated surfaces.
\newblock {\em Random Structures Algorithms}, 28(3):247--288, 2006.

\bibitem{Vietoris}
L.~Vietoris.
\newblock \"{U}ber den h\"oheren {Z}usammenhang kompakter {R}\"aume und eine
  {K}lasse von zusammenhangstreuen {A}bbildungen.
\newblock {\em Math. Ann.}, 97(1):454--472, 1927.

\bibitem{Afra_computing}
Afra Zomorodian and Gunnar Carlsson.
\newblock Computing persistent homology.
\newblock {\em Discrete Comput. Geom.}, 33(2):249--274, 2005.

\end{thebibliography}

\end{document}